\documentclass[12pt]{article}
\usepackage{hyperref,geometry,graphicx,amsmath,amssymb,amsfonts,amsthm,array}
\geometry{left=1.5cm, right=2cm}
\usepackage{pdflscape}

\newtheorem{thm}{Theorem}

\newtheorem{cor}[thm]{Corollary}
\newtheorem{defn}{Definition}
\newtheorem{exam}{Example}
\newtheorem{re}{Remark}
\newtheorem{dt}{Definition/Theorem}

\def\Res{{\rm Res}}

\newcommand{\CC}{\mathbb{C}} 
 \def\RR{{\mathbb{R}}} \newcommand{\ZZ}{\mathbb{Z}}

\def\Res{{\rm Res}}

\begin{document}
\begin{titlepage}
\title{Plane mixed discriminants and toric jacobians}
\author{Alicia Dickenstein \footnote{Email: alidick@dm.uba.ar,
Department of Mathematics, FCEN, University of Buenos Aires, and
IMAS, CONICET, Argentina}, 
Ioannis~Z.~Emiris \footnote{Email: emiris@di.uoa.gr, 
Department of Informatics \& Telecommunications,
University of Athens, Greece} 
and Anna Karasoulou \footnote{Email: akarasou@di.uoa.gr, 
Department of Informatics \& Telecommunications,
University of Athens, Greece}}

\end{titlepage}
\maketitle

\rightline{\em Dedicated to the memory of our friend Andrei Zelevinsky
(1953--2013)}

\bigskip

\abstract{Polynomial algebra offers a standard approach to handle several
problems in geometric modeling.
A key tool is the discriminant of a univariate polynomial, or of a
well-constrained system of polynomial equations,
which expresses the existence of a multiple root.
We describe discriminants in a general context, 
and focus on exploiting the sparseness of polynomials via
the theory of Newton polytopes and sparse (or toric) elimination.
We concentrate on bivariate polynomials and establish an original formula
that relates the mixed discriminant of two bivariate Laurent polynomials
with fixed support, with the sparse resultant of these polynomials 
and their toric Jacobian.
This allows us to obtain a new proof for the bidegree of the mixed discriminant
as well as to establish
multipicativity formulas arising when one polynomial can be factored.}

\section{Introduction} \label{sec:intro}

Polynomial algebra offers a standard and powerful approach to handle several
problems in geometric modeling.
In particular, the study and solution of systems of polynomial equations
has been a major topic.
Discriminants provide a key tool when examining well-constrained systems,
including the case of one univariate polynomial.  
Their theoretical study is a thriving and fruitful domain today,
but they are also very useful in a variety of applications.

The best studied discriminant is
probably known since high school, where one studies the discriminant
of a quadratic polynomial $f(x)=ax^2+bx+c=0$ ($a \not=0$).
The polynomial $f$ has a double root if and only if its discriminant
$\Delta_2=b^2-4ac$ is equal to zero.
Equivalently, this can be defined as the condition for $f(x)$
and its derivative $f'(x)$ to have a common root:
\begin{equation}
\exists\, x\; :\; f(x)=ax^2+bx+c=f'(x)=2ax+b=0\; \Leftrightarrow \Delta_2=0.
\end{equation}
One can similarly consider the discriminant of a univariate polynomial
of any degree.  
If we wish to calculate the discriminant $\Delta_5(f)$ of a polynomial $f$ of degree five
in one variable, we consider the condition that both $f$
and its derivative vanish:
\[ 
\begin{array}{l l} 
   f(x)=ax^5+bx^4+cx^3+dx^2+ex+g=0,\\ 
   f'(x)=5ax^4+4bx^3+3cx^2+2dx+e=0 .
\end{array} 
\] 
In this case,
elimination theory reduces the computation of $\Delta_{5}$ to the computation 
of a $9\times 9$ Sylvester determinant, which equals $a \, \Delta_5(f)$.
If we develop this determinant, we find out that the number monomials
in the discriminant increases rapidly with the input degree: 
\[ 
\begin{array}[width=0.01\textwidth]{l l} 
\Delta_5=-2050a^2g^2bedc+356abed^2c^2g-80b^3ed^2cg+18dc^3b^2g\\ 
e-746agdcb^2e^2+144ab^2e^4c-6ab^2e^3d^2-192a^2be^4d-4d^2ac\\ 
^3e^2+144d^2a^2ce^3-4d^3b^3e^2-4c^3e^3b^2-80abe^3dc^2+18b^3e^3\\ 
dc+18d^3acbe^2+d^2c^2b^2e^2-27b^4e^4-128a^2e^4c^2+16ac^4e^3-27\\ 
a^2d^4e^2+256a^3e^5+3125a^4g^4+160a^2gbe^3c+560a^2gdc^2e^2+1020\\ 
a^2gbd^2e^2+160ag^2b^3ed+560ag^2d^2cb^2+1020ag^2b^2c^2e-192\\ 
b^4ecg^2+24ab^2ed^3g+24abe^2c^3g+144b^4e^2dg-6b^3e^2c^2g+14\\ 
4dc^2b^3g^2-630dac^3bg^2-630d^3a^2ceg-72d^4acbg-72dac^4e\\ 
g-4d^3c^2b^2g-1600ag^3cb^3-2500a^3g^3be-50a^2g^2b^2e^2-3750a^3\\ 
g^3dc+2000a^2g^3db^2+2000a^3g^2ce^2+825a^2g^2d^2c^2+2250a^2g^3b\\ 
c^2+2250a^3g^2ed^2-900a^2g^2bd^3-900a^2g^2c^3e-36agb^3e^3-1600\\ 
a^3ge^3d+16d^3ac^3g-128d^2b^4g^2+16d^4b^3g-27c^4b^2g^2+108ac^5\\ 
g^2+108a^2d^5g+256b^5g^3 .
\end{array} 
\] 	
In fact,
if we compute the resultant of $f$ and $x f'$ by means of the $10 \times 10$
Sylvester determinant, we find the more symmetric output:
$a\, g \, \Delta_5(f)$. 
This formula is very well known for univariate discriminants~\cite{gkz}, and
we generalize it in Theorem~\ref{th:main}. 

One univariate polynomial is the smallest well-constrained system.
We are concerned with multivariate systems of sparse polynomials,
in other words, polynomials with
fixed support, or set of nonzero terms.
{\em Sparse (or toric)} elimination theory
concerns the study of resultants and discriminants 
associated with toric varieties. This theory has its origin in 
the work of Gel'fand, Kapranov and Zelevinsky on multivariate hypergeometric 
functions.  Discriminants arise as singularities of such functions \cite{gkz89}.

Gel'fand, Kapranov and Zelevinsky~\cite{gkz}
established a general definition of sparse discriminant, which gives as special case
the following definition of (sparse) mixed discriminant (see Section~\ref{sec:prelim} 
for the relation with the discriminant of the associated Cayley matrix 
and with the notion of mixed discriminant in~\cite{MD}).  In case $n=2$, the mixed discriminant 
detects tangencies between families of curves with fixed supports. In general,
the \textit{mixed discriminant} $\Delta_{A_1, \dots, A_n}(f_{1}, \dots ,f_{n})$
of $n$ polynomials in $n$ variables with fixed supports $A_1, \dots, A_n \subset \mathbb{Z}^n$
is the irreducible polynomial (with integer coprime coefficients, defined up to sign)  
in the coefficients of the $f_i$ which
vanishes whenever the system $f_1=\cdots=f_n=0$ has a multiple root (that is, a root which is not
simple) with non-zero coordinates, in case this discriminantal variety is a hypersurface (and equal to
the constant $1$ otherwise). The zero locus of 
the mixed discriminant is the variety of ill-posed systems \cite{SS}. 
We shall work with the polynomial 
defining the \emph{discriminant cycle} (see Section~\ref{sec:prelim}) 
which is defined as the power 
$\Delta_{A_1, \dots, A_n}^{i(A_1, \dots, A_n)}$ of the mixed discriminant raised to
the index 
\begin{equation}
 \label{eq:index}
i(A_1, \dots, A_n) = [\mathbb{Z}^n:\mathbb{Z} A_1 + \dots +\mathbb{Z} A_n],
\end{equation}
which stands for the index of 
lattice $\mathbb{Z} A_1+ \dots + \mathbb{Z} A_{n}$ in $\mathbb{Z}^n$.
In general, this index equals $1$ and so both concepts coincide.

Discriminants have many applications.
Besides the classical application in the realm of
differential equations to describe singularities, 
discriminants occur for instance
in the description of the topology of real algebraic plane curves \cite{GN},
in solving systems of polynomial
inequalities and zero-dimensional systems \cite{FMRS}, 
in determining the number of real roots of square systems of sparse polynomials \cite{DRRS},
in studying the stability of numerical solving \cite{D}, 
in the computation of the Voronoi diagram of curved objects \cite{ETT08}, or
in the determination of cusp points of parallel manipulators \cite{MRCW}.

Computing (mixed) discriminants is a (difficult) elimination problem. 
In principle, they can be computed with Gr\"obner bases, but this is
very inefficient in general since these polynomials have a rich combinatorial
structure \cite{gkz}. Ad-hoc computations via complexes (i.e., via tailored
homological algebra)
are also possible, but they also turn out to be complicated. The
tropical approach to compute discriminants was initiated in~\cite{DFS} and the
tropicalization of mixed planar discriminants was described in~\cite{DT}. 
Recently, in \cite{EKKL}, the authors focus on computing the discriminant of a multivariate
polynomial via interpolation, based on \cite{EFKP, Rin}; the latter essentially 
offers an algorithm for predicting the discriminant's Newton polytope,
hence its nonzero terms.
This yields a new output-sensitive algorithm which, however,
remains to be juxtaposed in practice to earlier approaches.

We mainly work in the case $n=2$, where the results are more transparent and the
basic ideas are already present, 
but all our results and methods can be generalized
to any number of variables. This will be addressed in a subsequent paper \cite{D13}. 
Consider for instance a system of two polynomials in two variables and
assume that, the first polynomial factors as $f_{1}=f'_{1}\cdot f''_{1}$. 
Then, the discriminant also factors and we thus
obtain a multiplicativity formula for it, which we make precise in 
Corollary~\ref{cor:factor}.
This significantly simplifies the discriminant's computation and generalizes the formula
in \cite{BJ} for the classical homogeneous case. This multiplicativity formula
is a consequence of our main result (Theorem~\ref{th:main} in dimension $2$, 
see also Theorem~\ref{th:BJ} in any dimension) 
relating the mixed discriminant and the resultant of
the given polynomials and their \emph{toric Jacobian} 
(see Section~\ref{sec:gral} for precise definitions
and statements). As another consequence of Theorem~\ref{th:main}, 
we reprove, in Corollary~\ref{cor:main2},
the bidegree formula for planar mixed discriminants in~\cite{MD}.

The rest of this chapter is organized as follows.
The next section overviews relevant existing work and definitions.
In Section~\ref{sec:gral} we present our main results relating the
mixed discriminant with the sparse resultant of the two polynomials
and their {toric Jacobian}.
In Section~\ref{sec:mult} we deduce the general multiplicativity 
formula for the mixed discriminant when one polynomial factors.

\section{Previous work and notation} \label{sec:prelim}

In this section we give a general description of discriminants and some
definitions and notations that we are going to use in the following sections.

Given a set $A \subset \mathbb{R}^n$, let $Q=conv(A)$ denote
the convex hull of $A$.
We say that $A$ is a lattice set or configuration if it is contained
in $\mathbb{Z}^n$, whereas
a polytope with integer vertices is called a lattice polytope.
We denote by $\mbox{Vol}(\cdot)$ the volume of a lattice polytope,
normalized with respect to the lattice ${\mathbb Z}^n$, so that
a primitive simplex has normalized volume equal to~$1$.
Normalized volume is obtained by multiplying Euclidean volume 
by $n!$.

Given a non-zero Laurent polynomial 
$$
f=\sum_{a}{c_{a}x^{a}}, 
$$
the finite subset $A$ of $\mathbb{Z}^{n}$ of those exponents $a$ for
which $c_a\not=0$ is called the \emph{support} of $f$. 
The \textit{Newton polytope} $N(f)$ of $f$
is the lattice polytope defined as the convex hull of $A$.

A (finite) set $A$ is said to be \textit{full}, if it consists of 
all the lattice points in its convex hull.
In~\cite{MD}, $A$ is called \emph{dense} in this case, but we prefer to reserve
the word dense to refer to the classical homogeneous case.
A subset $F \subseteq A$ is called a \textit{face} of $A$, denoted $F \prec A$, if $F$ 
is the intersection of $A$ with a face of the polytope $conv(A)$.

As usual $Q_{1}+Q_{2}$ denotes the Minkowski sum of sets
$Q_{1}$ and $Q_{2}$ in $\mathbb{R}^n$.
The \textit{mixed volume} $MV(Q_{1},\dots,Q_{n})$ of $n$ convex polytopes
$Q_{i}$ in $\mathbb{R}^{n}$ is the multilinear function with respect to
Minkowski sum that generalizes the notion of volume in the sense that 
$MV(Q, \dots, Q) = \mbox{Vol}(Q)$, when all $Q_{i}$ equal a fixed
convex polytope $Q$.

The following key result is due to Bernstein and Kouchnirenko.
The mixed volume of the Newton polytopes of $n$ Laurent polynomials
$f_{1}(x), \dots , f_{n}(x)$ in $n$ variables is an integer that
bounds the number of isolated common solutions of $f_{1}(x)=0, \dots , f_{n}(x)=0$
in the algebraic torus $(K^{*})^{n}$, over an algebraically closed field $K$ 
containing the coefficients. If the coefficients of the polynomials 
are generic, then the
common solutions are isolated and their number equals the mixed volume.
This bound generalized B\'ezout's classical bound to the sparse case:
for homogeneous polynomials
the mixed volume and B\'ezout's bound coincide. 
 
Mixed volume can be defined in terms of Minkowski sum volumes as follows.
\[
MV_{n}(Q_{1},\dots,Q_{n})= \sum_{k=1}^{n}(-1)^{n-k}
	\sum_{I\subset \{1,\dots,n\}, |I|=k}
	\frac{1}{n!} \mbox{Vol}\Big(\sum_{i\in I} Q_{i} \Big).
\] 
This implies, for $n=2$:
$$ 
2 MV(Q_{1},Q_{2})=
	\mbox{Vol}(Q_{1}+Q_{2})-\mbox{Vol}(Q_{1})-\mbox{Vol}(Q_{2}).
$$

\begin{def}
\label{def:essential}
A family of finite lattice configurations $A_1, \dots, A_{k}$ in $\mathbb{Z}^n$
is called essential if the affine dimension of the lattice $\mathbb{Z} A_1 + \dots +\mathbb{Z} A_k$
equals $k-1$, and for all proper subsets $I \subset \{1, \dots, k\}$ it holds that
the affine dimension of the lattice generated by $\{A_i, i \in I\}$ is greater or
equal than its cardinality $|I|$.
\end{def}

\begin{dt}\label{resultant}{\rm\cite{gkz,S}}
Fix a family of $n+1$ finite lattice configurations $A_1, \dots, A_{n+1}$ which contains a
unique essential subfamily $\{A_i, i \in I\}$. Given Laurent polynomials in $n$ variables
$f_{1}, \dots , f_{n+1}$  with supports $A_1, \dots, A_{n+1}$,
the \textit{resultant} $\Res_{A_1, \dots, A_{n+1}} (f_{1}, \dots, f_{n+1})$ 
is the irreducible polynomial with coprime integer coefficients (defined up to sign)
in the coefficients of $f_{1}, \dots, f_{n+1}$, which vanishes whenever
$f_{1}, \dots , f_{n+1}$ have a common root in the torus $(\mathbb{C}^*)^n$. In fact, in
this case, the resultant
only depends on the coefficients of $f_i$ with $i \in I$.

If there exist more than one essential subfamilies, then the (closure of the) 
variety of solvable systems
is not a hypersurface and in this case we set: 
$$\Res_{A_1, \dots, A_{n+1}}(f_{1}, \dots, f_{n+1})= 1.$$
\end{dt}

In what follows, we consider  $n$ (finite) lattice configurations 
$A_{1},\dots,A_{n}$  in $\mathbb{Z}^{n}$ and we denote by $Q_1, \dots, Q_n$
their respective convex hulls. Let $f_1, \dots, f_n$ be Laurent polynomials
with support $A_1, \dots, A_n$ respectively:
\[ f_i(x) = \sum_{\alpha \in A_i} c_{i, \alpha} x^\alpha, \quad i = 1 \dots, n.\]
In \cite{MD} the \emph{mixed discriminantal variety}, is defined as closure of the locus of coefficients 
$c_{i,\alpha}$ for which the associated system $f_1= \dots = f_n=0$  has a non-degenerate multiple root
$x \in (K^{*})^{n}$. This means that $x$ is an isolated root and
the $n$ gradient vectors 
\[\left(\frac{\partial f_{i}}{\partial x_{1}}(x), \dots , \frac{\partial f_{i}}{\partial x_{n}}(x)\right)\]
are linearly dependent, but any $n-1$ of them are linearly independent.

\begin{def}\label{def:md}
If the mixed discriminantal variety is a hypersurface,  the \textit{mixed discriminant} 
of the previous system is the unique up to sign irreducible polynomial 
$\Delta_{A_{1}, \dots, A_{n}}$ with integer coefficients in the unknowns 
$c_{i,a}$ which defines this hypersurface.
Otherwise, the family is said to be defective and we set $\Delta_{A_{1}, \dots, A_{n}}=1$.
The \emph{ mixed discriminant cycle} $\tilde{\Delta}_{A_{1},\dots,A_{n}}$ is equal to
$i(A_1, \dots, A_n)$ times the mixed discriminant variety, and thus
its equation equals $\Delta_{A_{1},\dots,A_{n}}$ raised to this integer (defined in \eqref{eq:index}).
\end{def}

By \cite[Theorem~2.1]{MD}, when the family $A_1, \dots, A_n$ is non defective,
the mixed discriminant $\Delta_{A_{1},\dots,A_{n}}$ 
coincides with the $A$-discriminant defined in \cite{gkz}, where $A$ is the Cayley matrix
\[A=\left(
\begin{array}{cccc}
	   1&      0&    \dots    &0\\
   	 0&      1&    \dots    &0\\
\dots & \dots &    \dots    &\dots\\
   	 0&      0&    \dots    &1\\
 A_{1}&  A_{2}&    \dots    &A_{n}
\end{array}\right).\] 
This matrix has $2n$ rows and $m=\sum_{ i=1}^{n}|A_{i}|$ columns, 
so $0=(0,\dots,0)$ 
and $1=(1,\dots,1)$ denote row vectors of appropriate lengths.
We introduce $n$ new variables $y_{1},\dots,y_{n}$  
in order to encode the system 
$f_{1}=\dots=f_{n}=0$ in one polynomial with support in $A$, 
via the \emph{Cayley trick}: 
$\phi(x,y)=y_{1}f_{1}(x)+\dots+y_{n}f_{n}(x)$. 
Note that $i(A_1, \dots, A_n) = [\mathbb{Z}^{2n}, \mathbb{Z} A]$.

In what follows when we refer to resultants or discriminants we will 
refer to the equations of the corresponding cycles, but we will omit
the tildes in our notation. More explicitly, we will follow the convention 
in the article \cite{DS} by D'Andrea and Sombra. 
In general, both definitions coincide, but this convention 
allows us to present cleaner formulas. For instance, when the family 
$A_1, \dots, A_{n+1}$ is essential, our notion
of resultant equals the resultant in \cite{gkz,S} raised to the index
$i(A_1, \dots, A_{n+1})$. 
In most examples these two lattices coincide, and so our resultant cycle 
equals the resultant variety and the associated resultant polynomial is
irreducible.
 
\begin{re}\label{re:monomial}
Assume $A_1$ consists of a single point $\alpha$ and that $\{1\}$ is the 
only essential subfamily of a given family
$A_1, \dots, A_{n+1}$. Let $f_1(x)= c x^{\alpha}$. Then, for any choice of 
Laurent polynomials $f_2, \dots, f_{n+1}$
with supports $A_2, \dots, A_{n+1}$, it holds that (cf. \cite[Proposition~2.2]{DS})
\begin{equation}\label{eq:expc}
{\rm Res}_{A_1, \dots, A_{n+1}}(f_1, \dots, f_n) \, = \, c^{MV(A_2, \dots,A_{n+1})}.
\end{equation}
\end{re}

With this convention, the following multiplicativity formula holds:

\begin{thm}{\rm\cite{DS,PS}}\label{th:productres}
Let $A'_1,A''_1, A_1, \dots, A_{n+1}$ be finite subsets of $\ZZ^n$ 
with $A_1 = A'_1+A''_1$. Let $f_1, \dots, f_{n+1}$ be
polynomials with supports contained in $A_1,\dots, A_{n+1}$ and assume 
that $f_1 = f'_1 f''_1$ where $f'_1$ has support $A'_1$ and
$f''_1$ has support $A''_1$. Then
\begin{equation*}
{\rm Res}_{A_1, \dots, A_{n+1}}(f_1, \dots, f_{n+1}) \, = \, 
{\rm Res}_{A'_1, \dots, A_{n+1}}(f'_1, \dots, f_{n+1}) \cdot 
{\rm Res}_{A''_1, \dots, A_{n+1}}(f''_1, \dots, f_{n+1}).
\end{equation*}
\end{thm}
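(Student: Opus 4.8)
Since Theorem~\ref{th:productres} is quoted from \cite{DS,PS}, the task here is to recall how one argues. My plan is to prove the identity first for generic auxiliary polynomials $f_2,\dots,f_{n+1}$, where a Poisson-type product formula for the sparse resultant is available, and then to propagate it to all coefficients, dealing with the degenerate configurations separately. Throughout I would use two consequences of $A_1=A_1'+A_1''$: mixed volume is multilinear with respect to Minkowski sum, so the mixed volumes below add when $A_1'$ or $A_1''$ is put in place of $A_1$; and the difference lattices satisfy $\ZZ A_1=\ZZ A_1'+\ZZ A_1''$, so the indices in \eqref{eq:index} are compatible. When one of the three families is defective, or when $A_1'$ (resp. $A_1''$) reduces to a single point, the corresponding resultant collapses by Definition/Theorem~\ref{resultant} or by Remark~\ref{re:monomial}; I would settle these cases directly from those statements and concentrate on the main one, in which all three resultants genuinely involve the coefficients of $f_1$, resp. $f_1'$ and $f_1''$.

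For the main case, first I would specialize $f_2,\dots,f_{n+1}$ to have generic coefficients; by the Bernstein--Kouchnirenko theorem they then possess exactly $N:=MV(Q_2,\dots,Q_{n+1})$ simple common zeros $\xi_1,\dots,\xi_N\in(\CC^\ast)^n$. The Poisson product formula for the sparse resultant gives, with $f_1$ still generic,
\begin{equation*}
\Res_{A_1,\dots,A_{n+1}}(f_1,\dots,f_{n+1})\;=\;u\cdot\prod_{j=1}^{N}f_1(\xi_j),
\end{equation*}
where $u\neq0$ depends only on the coefficients of $f_2,\dots,f_{n+1}$ (and, as part of the precise statement, only through the Newton polytopes $Q_2,\dots,Q_{n+1}$, in a way that is multiplicative in the first support); the case $f_1=c\,x^{\alpha}$ of this formula is exactly Remark~\ref{re:monomial}, for which the product equals $c^{N}$. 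Substituting $f_1=f_1'f_1''$ and using $f_1(\xi_j)=f_1'(\xi_j)\,f_1''(\xi_j)$ factors the product as $\Big(\prod_{j}f_1'(\xi_j)\Big)\Big(\prod_{j}f_1''(\xi_j)\Big)$. Applying the same formula to the families $A_1',A_2,\dots,A_{n+1}$ and $A_1'',A_2,\dots,A_{n+1}$ with the identical $f_2,\dots,f_{n+1}$ --- the integer $N$ and the points $\xi_j$ are unchanged, since $Q_1$ does not enter $MV(Q_2,\dots,Q_{n+1})$ --- identifies each of these two products with the corresponding sub-resultant, up to nonzero factors $u'$ and $u''$ again depending only on $f_2,\dots,f_{n+1}$. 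Multiplying yields the asserted identity up to a scalar $\lambda=u/(u'u'')$ in the coefficients of $f_2,\dots,f_{n+1}$ alone; since all the quantities involved are polynomials in the coefficients, the identity with this $\lambda$ then propagates to arbitrary $f_1',f_1'',f_2,\dots,f_{n+1}$.

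It remains to see that $\lambda=1$. Comparing multidegrees block by block and invoking multilinearity of $MV$ --- the degree of the left-hand side in the coefficients of $f_i$ ($i\ge2$) equals the sum of the two mixed volumes obtained by using $Q_1'$, resp. $Q_1''$, in place of $Q_1$, matching the right-hand side, while the degrees in the coefficients of $f_1'$ and of $f_1''$ are each equal to $N$ on either side --- shows that $\lambda$ is a constant. That this constant is $1$ comes down to the multiplicativity $u=u'u''$ of the Poisson factor under $Q_1=Q_1'+Q_1''$; one can also see $\lambda=\pm1$ from the primitivity of the integer resultants and Gauss's lemma, and fix the sign by the monomial specialization $f_1'=x^{\alpha'}$, $f_1''=x^{\alpha''}$ with $\alpha',\alpha''$ vertices of $Q_1',Q_1''$. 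The step I expect to be the genuine obstacle is not the algebra above but the precise control of normalizations in the cycle convention --- in particular the index factors that must be carried through the Poisson formula and the verification of $u=u'u''$ --- together with a careful treatment of the defective boundary cases and of the transfer of the unique-essential-subfamily hypothesis among the three families; this is exactly where the work of \cite{DS} and \cite{PS} lies.
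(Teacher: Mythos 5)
You should first note that the paper contains no proof of Theorem~\ref{th:productres}: it is imported verbatim from \cite{DS,PS}, so there is no in-paper argument to measure your attempt against. That said, your outline is essentially the argument that underlies those sources, in particular \cite{PS}: specialize $f_2,\dots,f_{n+1}$ generically, invoke the Poisson-type product formula expressing the resultant as an extraneous factor times $\prod_j f_1(\xi_j)$ over the $MV(Q_2,\dots,Q_{n+1})$ common torus zeros, split the product using $f_1(\xi_j)=f_1'(\xi_j)f_1''(\xi_j)$, and then control the constant. One precision worth making: the Poisson factor $u$ is not a function of the Newton polytopes $Q_2,\dots,Q_{n+1}$ alone; it is a product of facet resultants of $f_2,\dots,f_{n+1}$ (so it involves their coefficients) raised to exponents given by the values of the support function of $Q_1$ at the primitive inner normals. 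This is exactly why $u=u'u''$ holds: support functions are additive under Minkowski sum, so $Q_1=Q_1'+Q_1''$ makes the exponents add, and your ``multiplicative in the first support'' claim is precisely this additivity rather than something to be checked separately. The remaining delicate points you correctly identify --- the cycle convention with the index \eqref{eq:index}, the defective and single-point cases (where Definition/Theorem~\ref{resultant} and Remark~\ref{re:monomial} take over), and the transfer of the essential-subfamily hypothesis among the three families --- are exactly what \cite{DS} is designed to handle, and the paper adopts that convention precisely so the statement holds without extra hypotheses; as a reconstruction of the cited proof your sketch is sound, with those normalization details left, appropriately, to the references.
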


Cattani, Cueto, Dickenstein, Di Rocco and Sturmfels in \cite{MD} proved that 
the degree of the mixed discriminant $\Delta$ is a piecewise linear function in the 
Pl\"ucker coordinates of a mixed Grassmanian. 
An explicit degree formula for plane curves is also presented in \cite[Corollary~3.15]{MD}. 
In case $A_1, A_2$ consist of all the lattice points in their convex hulls, 
they are two dimensional and with the same normal fan, then
the bidegree of $\Delta_{A_{1},A_{2}}$ satisfies the following:
bidegree of $\Delta_{A_{1},A_{2}}$ in the coefficients of $f_i$ equals:
\begin{equation*}
= {\rm Vol}(Q_{1}+Q_{2})- area(Q_{i}) -perimeter(Q_{j}),
\end{equation*}
 where $i\in\{1,2\}, \, i \ne j$.
where $Q_{i}=conv(A_{i})$, $i=1,2$, and $Q_{1}+Q_{2}$ is their Minkowski sum. 
The area is normalized, so that a primitive triangle has area $1$ and the 
perimeter of $Q_{i}$ is the cardinality of $\partial Q_{i}\cap \mathbb{Z}^{2}.$
We will recover the general formula for this degree and present it in 
Corollary~\ref{cor:main2}.

Bus\'e and Jouanolou consider in~\cite{BJ} the following equivalent 
definition of the mixed discriminant, in case where
$f_{1},\dots,f_{n}$ are dense homogeneous polynomials in 
$(x_0,\dots,x_n)$ of degrees $d_{1},\dots,d_{n}$ respectively, 
that is, their respective supports 
$A_i = d_i \sigma $ are all
the lattice points in the $d_i$-th dilate  of the unit simplex $\sigma$ in $\RR^n$. 
It is the non-zero polynomial in the coefficients of $f_1,\dots, f_n$ which equals
\begin{equation} \label{eq:BJ}
\frac{\Res_{d_1 \sigma, \dots, d_n \sigma, \delta_i \sigma}
(f_{1},\dots,f_{n},J_{i})}{\Res_{d_1 \sigma, \dots, d_n \sigma, \sigma}(f_{1},\dots,f_{n},x_{i})},
\end{equation}
for all $i\in \{1,\dots,n\}$, where $J_i$ is the maximal minor of the 
Jacobian matrix associated to $f_1, \dots, f_{n}$ obtained by deleting the
$i$-th.  
We give a more symmetric and general formula in Corollary~\ref{cor:BJ} below.

The multiplicativity property of the discriminant in the case 
of dense homogeneous polynomials
was already known to Sylvester \cite{Syl} and  generalized by Bus{\'{e}} 
and Jouanolou in \cite{BJ}, where they proved that when in particular 
$A_1 = d_1 \sigma = (d'_1+ d''_1) \sigma$ 
and $f_1$ is equal to the product of two polynomials 
$f'_1 \cdot f''_1$ with respective degrees $d'_1, d''_1$, 
the following factorization holds:
\begin{equation}\label{eq:guess}\begin{split}
\Delta_{d_{1}\sigma, \dots, d_n \sigma}(f_{1}, \dots, f_{n}) \, = \, &
\Delta_{d'_{1}\sigma, \dots ,d_{n}\sigma}(f'_{1}, \dots ,f_{n})
\cdot \Delta_{d''_{1}\sigma, \dots ,d_{n}\sigma}(f''_{1}, \dots ,f_{n}) \\
& \cdot {\rm Res}_{d'_{1}\sigma, d''_1 \sigma, \dots ,d_{n}\sigma}(f'_{1},f''_{1},
\dots ,f_{n})^{2}.\end{split}
\end{equation}

It is straightforward to see in general from the definition, that the
vanishing of any of the polynomials
 $\Delta_{A'_{1}, \dots ,A_{n}}(f'_{1}, \dots ,f_{n})$, $\Delta_{A''_{1}, 
\dots ,A_{n}}(f''_{1}, \dots ,f_{n})$, or ${\rm Res}_{A'_{1},A''_{1},
 \dots ,A_{n}}(f'_{1},f''_{1}, \dots ,f_{n})$ 
implies that \[\Delta_{A'_{1}+A''_{1}, \dots ,A_{n}}(f'_{1}f''_{1}, f_2, \dots ,f_{n})=0.\]

It follows from \cite{E10} that when each support configuration $A_i$ is  full,
the Newton polytope of 	the discriminant $\Delta_{A'_{1}+A''_{1}, A_{2},\dots, A_n}
(f'_{1}f''_{1}, f_2, \dots, f_n)$ 
equals the Minkowski sum of the Newton polytopes of the discriminants 
$\Delta_{A'_{1},A_{2}, \dots, A_n}(f'_{1}, f_{2}, \dots, f_n)$ and $\Delta_{A''_{1},A_{2}, \dots, A_n}
(f''_{1}, f_{2}, \dots, f_n)$ plus
two times the Newton polytope of the resultant
${\rm Res}_{A'_{1},A''_{1}, A_{2}, \dots, A_n}(f'_{1},f''_{1}, f_{2}, \dots, f_n)$.
So, a first guess would be that the factorization into the three factors 
in~\eqref{eq:guess} above holds for general supports. We will see in 
Corollary~\ref{cor:factor} that indeed other factors may occur, which we describe explicitly.

This behaviour already occurs in the univariate case:

\begin{exam} \label{ex:univariate}
Let $A'_{1} = \{0, i_{1}, \dots  , i_{m}, d_{1}\},
A''_{1} = \{0, j_{1}, \dots , j_{l}, d_{2}\}$ be the support sets of  
$f'_{1} = a_{0}+a_{i_{1}}x^{i_{1}}+ \dots +a_{i_{m}}x^{i_{m}}+a_{d_{1}}x^{d_{1}}, f''_{1} = 
b_{0}+b_{j_{1}}x^{j_{1}}+\dots +b_{j_{l}}x^{j_{l}}+b_{d_{2}}x^{d_{2}}
$ respectively. Then 
\[
\Delta(f'_{1}f''_{1})=\Delta(f'_{1})\cdot \Delta(f''_{1})\cdot R(f'_{1},f''_{1})^{2}\cdot E,
\]
where
$E =a_{0}^{i_{1}-m_{0}} \, b_{0}^{j_{1}-m_{0}}\, a_{d_{1}}^{d_{1}-i_{m}-m{1}} \, b_{d_{2}}^{d_{2}-j_{l}-m_{1}},$ 
with $m_{0} := \min\{i_{1}, j_{1}\}$ and $m_{1} := \min\{d_{1}-i_{m}, d_{2}-j_{l}\}.$
On the other hand, in the full case
$i_{1} = j_{1} = 1, i_{m} = d_{1}-1, j_{l} = d_{2}-1$, 
thus $E = 1$ because its exponents are equal to zero.
\end{exam}

\section{A general formula}\label{sec:gral}

The aim of this section is to present a formula which relates the mixed 
discriminant with the resultant of the given polynomials and their 
toric Jacobian, whose definition we recall.

\begin{defn} \label{def:JacT}
Let $f_1(x_1,\ldots,x_n),\ldots,f_n(x_1,\ldots,x_n)$ be $n$ Laurent polynomials in $n$ variables. 
The associated toric Jacobian $J_f^T$ equals $x_1 \cdots x_n$ times the 
determinant of the \textit{Jacobian matrix of f}, 
or equivalently, the determinant of the matrix:
$$\begin{bmatrix} x_1\dfrac{\partial f_1}{\partial x_1} & \cdots & 
x_n\dfrac{\partial f_1}{\partial x_n} \\ \vdots & \ddots & \vdots \\ 
x_1\dfrac{\partial f_n}{\partial x_1} & \cdots & x_n\dfrac{\partial f_n}
{\partial x_n} \end{bmatrix}.$$
\end{defn}
Note that the Newton polytope of $J_f^T$ is contained in the sum of the 
Newton polytopes of $f_1, \dots, f_n$.

As we remarked before, we will mainly deal in this chapter with the case $n=2$.
Also, to avoid excessive notations and make the main results cleaner,
we assume below that $A_1, A_2$ are two finite lattice configurations
whose convex hulls satisfy
$$
\dim(Q_1) = \dim(Q_2)=2.
$$ 

Let $f_1, f_2$ be polynomials with respective supports $A_1, A_2$:
\[ f_i(x) = \sum_{\alpha \in A_i} c_{i, \alpha} x^\alpha, \quad i = 1,2,\]
where $x=(x_1, x_2)$.
We denote
by $\Sigma$ the set of primitive inner normals $\eta\in (\ZZ^2)^*$ of the edges 
of $A_1+A_2$. We call $A_i^\eta$ the face of $A_i$ where the inner 
product with $\eta$ is minimized. 
We call this minumum value $\nu_i^\eta$. We also denote
by $f_i^\eta$ the subsum of terms in $f_i$ with exponents in this face
\[ f_i^\eta (x) = \sum_{\alpha \in A_i^\eta} c_{i, \alpha} x^\alpha, \quad i = 1,2,\]
which is $\eta$-homogeneous of degree $\nu_i^\eta$. Up to
multiplying $f_i$ by a monomial (that is, after translation of $A_i$) we can assume
without loss of generality that $\nu_i^\eta \not=0$.
Now, $A_i^\eta$ is either a vertex of $A_i$ (but not of both $A_1, A_2$
since two vertices do not give a Minkowski sum edge),
or its convex hull is an edge of $A_i$ (with inner normal $\eta$),
which we denote by $e_i^\eta$. 
 Note that if the face of $A_1+A_2$ associated to $\eta$ is a vertex,
both polynomials $f_i^\eta$ are monomials and their resultant
locus has codimension two. 
 
We denote by $\mu_i(\eta)$ $(i=1,2)$ the integer defined by
the following difference:
\begin{equation}\label{eq:mui}
\mu_i(\eta) = {\rm min} \{ \langle \eta, m \rangle, m \in A_i - A_i^\eta \} \, - \, \nu_i^\eta.
\end{equation}
and by 
\begin{equation}\label{eq:mu}
\mu(\eta) = {\rm min} \{ \mu_1(\eta), \mu_2(\eta) \},
\end{equation} 
the minimum of these two integers. Note that by our assumption that $\dim(Q_i) = 2$,
we have that $\mu(\eta) \ge 1$.

Without loss of generality, we can translate the support sets $A_1^\eta, A_2^\eta$ 
to the origin and consider the line $L^\eta$ containing them.
The residue (cycle) $\Res_{A_1^\eta, A_2^\eta}(f_1^\eta, f_2^\eta)$ 
is considered as before, with respect to the lattice $L^\eta \cap \ZZ^2$.

\begin{re}\label{rem:2}
As in Remark~\ref{re:monomial}, if $f_1^\eta$ is a monomial, the resultant 
equals the coefficient of $f_1^\eta$ raised to the normalized length 
$\ell(e_2^\eta)$ of the edge $e_2^\eta$ of $A_2$ 
 (that is, the number of integer points in the edge, minus $1$).
If $\eta$ is an inner normal of edges $A_1^\eta$ and $A_2^\eta$,
then the resultant equals the irreducible resultant raised to the index
of $\ZZ A_1^\eta + \ZZ A_2^\eta$ in $L^\eta \cap \ZZ^2$. In particular, 
the exponent $\mu(\eta) = 1$ if at least one of the
configurations is full. 
\end{re}

The following is our main result. 

\begin{thm} \label{th:main}
Let $f_1, f_2$ be generic Laurent polynomials with respective supports
$A_1, A_2$. Then,
\[
{\rm Res}_{A_1,  A_2, A_1+ A_2}(f_1,  f_2, J_f^T) = \Delta_{A_{1}, A_{2}}(f_{1}, f_{2}) \cdot E,
\]
where the factor $E$ equals the finite product:
\[E  =  \prod_{\eta \in \Sigma} \Res_{A_1^\eta, A_2^\eta}(f_1^\eta, f_2^\eta)^{\mu(\eta)}.\]
\end{thm}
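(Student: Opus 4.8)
The plan is to compare the divisor of the left-hand side with the divisor of the right-hand side on the space of coefficient vectors $(c_{1,\alpha},c_{2,\beta})$, using the irreducibility of each factor. The resultant $\Res_{A_1,A_2,A_1+A_2}(f_1,f_2,J_f^T)$ vanishes precisely when the system $f_1=f_2=J_f^T=0$ has a common solution in $(\CC^*)^2$; geometrically this says $f_1,f_2$ have a common torus zero $x$ at which their gradient rows are linearly dependent, i.e. $x$ is a multiple root. The first step is therefore to identify the irreducible components of the hypersurface $\{\Res_{A_1,A_2,A_1+A_2}(f_1,f_2,J_f^T)=0\}$. One component is the mixed discriminant hypersurface $\{\Delta_{A_1,A_2}=0\}$, corresponding to the generic situation where the multiple root is non-degenerate and lies in the torus. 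The remaining components arise from degenerations ``at infinity'': for each edge direction $\eta\in\Sigma$, when the facial subsystems $f_1^\eta=f_2^\eta=0$ acquire a common root (equivalently $\Res_{A_1^\eta,A_2^\eta}(f_1^\eta,f_2^\eta)=0$), a root of the system escapes to the corresponding toric boundary stratum and $J_f^T$ vanishes there in the limit. So I would first prove the set-theoretic equality of hypersurfaces: $\{\text{LHS}=0\}=\{\Delta_{A_1,A_2}=0\}\cup\bigcup_{\eta\in\Sigma}\{\Res_{A_1^\eta,A_2^\eta}(f_1^\eta,f_2^\eta)=0\}$.

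The second step is to pin down the multiplicities. Since $\Delta_{A_1,A_2}$ and each $\Res_{A_1^\eta,A_2^\eta}(f_1^\eta,f_2^\eta)$ are irreducible polynomials in disjoint (or at least distinct) sets of coefficients, it suffices to compute, for each, the order of vanishing of the LHS along the corresponding prime divisor. For $\Delta_{A_1,A_2}$ the multiplicity is $1$ because a generic point of the discriminant locus has a single non-degenerate multiple root, at which $J_f^T$ vanishes simply (this is the standard transversality computation: the Hessian-type condition defining non-degeneracy ensures $J_f^T$ has a simple zero as a function of coefficients along the discriminant). For the edge factor attached to $\eta$, I would use a toric/tropical degeneration: pass to the face $\eta$ by a one-parameter rescaling $x\mapsto(t^{\eta_1}x_1,t^{\eta_2}x_2)$ and let $t\to 0$, so that $f_i\to f_i^\eta$ up to a monomial; track how many solution branches of $f_1=f_2=J_f^T=0$ converge to the stratum, and with what local order, in terms of the integers $\mu_i(\eta)$ from \eqref{eq:mui}. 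The count $\mu(\eta)=\min\{\mu_1(\eta),\mu_2(\eta)\}$ is exactly the valuation gap controlling how the toric Jacobian, restricted to this branch, degenerates — its leading term along the branch is governed by whichever of $f_1,f_2$ is ``thinner'' transverse to the edge $e^\eta$, contributing the factor $\Res_{A_1^\eta,A_2^\eta}(f_1^\eta,f_2^\eta)^{\mu(\eta)}$. Remark~\ref{rem:2} (which identifies the facial resultant and the role of $\mu(\eta)=1$ in the full case) is the combinatorial input here, and the multiplicativity of the resultant (Theorem~\ref{th:productres}) together with Remark~\ref{re:monomial} handle the case where $f_i^\eta$ is a monomial.

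The third step is a degree check to confirm no factor has been missed: both sides are polynomials in the $c_{i,\alpha}$, and once one knows LHS and RHS have the same prime divisors with the same multiplicities, they agree up to a nonzero constant (which is $\pm1$ by the normalization of resultants and discriminants as primitive integer polynomials, fixed by comparing one monomial or by a generic specialization). Alternatively, a cleaner route for this step: invoke the bidegree formula for $\Delta_{A_1,A_2}$ from \cite{MD}, the known degree of $\Res_{A_1,A_2,A_1+A_2}$ (a mixed volume, via Bernstein–Kouchnirenko applied to $f_1,f_2,J_f^T$ whose Newton polytope sits inside $Q_1+Q_2$), and the degrees of the facial resultants, and verify the degrees balance — this is a finite computation once the combinatorics of $\mu(\eta)$ is set up.

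The main obstacle I expect is the second step: rigorously computing the multiplicity of the LHS along each edge divisor $\{\Res_{A_1^\eta,A_2^\eta}(f_1^\eta,f_2^\eta)=0\}$ and showing it equals exactly $\mu(\eta)$. This requires a careful local analysis of the toric Jacobian along the escaping branches — essentially understanding how $J_f^T$ behaves under the toric degeneration to the boundary divisor $D_\eta$ of the toric variety $X_{Q_1+Q_2}$, and checking that the order of contact is dictated by the lattice-width integers $\mu_i(\eta)$ rather than by something larger. Getting the bookkeeping of normalized lattice lengths and indices right (so that the exponent is $\mu(\eta)$ and not, say, $\mu(\eta)$ times a length) is where the argument is most delicate, and where one must use that $f_1,f_2$ are \emph{generic} so that the facial subsystems have the expected transverse behavior and the higher-order terms in the degeneration do not interfere.
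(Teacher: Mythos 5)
Your overall strategy coincides with the paper's: interpret $\Res_{A_1,A_2,A_1+A_2}(f_1,f_2,J_f^T)$ as the condition that the three sections have a common zero on the toric variety $X$ of $A_1+A_2$, so that the torus part contributes $\Delta_{A_1,A_2}$ and each boundary divisor $D_\eta$ contributes the facial resultant, and then determine the exponents via the one-parameter toric degeneration $F_i(t,x)=\sum_{\alpha\in A_i}c_{i,\alpha}t^{\langle\eta,\alpha\rangle-\nu_i^\eta}x^\alpha$. However, the step you yourself flag as the main obstacle --- proving that the exponent along the $\eta$-component is exactly $\mu(\eta)$ --- is precisely where the proof lives, and your proposal does not contain the idea that makes it work. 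The paper's key observation is the weighted Euler identities $\eta_1x_1\partial_{x_1}f_i^\eta+\eta_2x_2\partial_{x_2}f_i^\eta=\nu_i^\eta f_i^\eta$, which show that $J^T_{f^\eta}$ lies in the ideal $I(f_1^\eta,f_2^\eta)$. This does two things at once: it gives the set-theoretic statement you assert (a common zero of the three sections on $D_\eta$ is equivalent to a common zero of $f_1^\eta,f_2^\eta$ alone, a claim you state but do not justify), and, more importantly, after writing $f_i=F_i(t,x)$ with $f_i^\eta=F_i-t^{\mu_i(\eta)}G_i$ it lets one rewrite $J^T_F=H_1+t^{\mu(\eta)}H_2$ with $H_1\in I(F_1,F_2)$. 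Since the resultant is unchanged when the last entry is modified by elements of the ideal generated by the first two, one is reduced to $\Res^X(F_1,F_2,t^{\mu(\eta)}H_2)$, and the product formula (Theorem~\ref{th:productres}) together with Remark~\ref{re:monomial} peels off exactly $\Res_{A_1^\eta,A_2^\eta}(f_1^\eta,f_2^\eta)^{\mu(\eta)}$, the genericity of the coefficients guaranteeing $t\nmid H_2$. Your proposed substitute --- tracking escaping solution branches and their local orders --- is not carried out, and without the Euler-identity reduction it is not clear how you would show the order of vanishing is $\mu(\eta)$ rather than $\mu(\eta)$ times a lattice length or index, which is exactly the bookkeeping danger you acknowledge.

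Two smaller points. First, your claim that $\Delta_{A_1,A_2}$ occurs with multiplicity one is asserted via a ``standard transversality computation'' that is not supplied (and at the cycle level one must also note, as the paper does, that the indices $[\ZZ^2:\ZZ A_1+\ZZ A_2]$ and $[\ZZ^2:\ZZ A_1+\ZZ A_2+\ZZ(A_1+A_2)]$ coincide). Second, your fallback for the final step --- balancing degrees using the bidegree formula of \cite{MD} --- is logically admissible but would forfeit the paper's Corollary~\ref{cor:main2}, which derives that bidegree formula as a consequence of this theorem; the paper instead closes the argument intrinsically, by accounting for all components over $X$ and their multiplicities.
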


\begin{proof}
Let $X$ be the projective toric variety associated to $A_1+A_2$.  This compact 
variety consists of an open dense set $T_X$ isomorphic to the torus $(\CC^*)^2$ plus one toric divisor
$D_\eta$ for each $\eta \in \Sigma$. The Laurent polynomials $f_1, f_2, J^T_f$
define sections $L_1, L_2, L_J$ of globally generated line bundles on $X$.
The resultant ${\rm Res}_{A_1,  A_2, A_1+ A_2}(f_1,  f_2, J_f^T)$
vanishes if and only if $L_1, L_2, L_J$ have a common zero on $X$, which could be
at $T_X$  or at any of the $D_\eta$.

There is an intersection point at $T_X$ if and only if
there is a common zero of $f_1, f_2$ and $J^T_f$ in the torus $(\CC^*)^2$. In this case, the discriminant
$\Delta_{A_{1}, A_{2}}(f_{1}, f_{2})$ would vanish.  It follows that $\Delta_{A_{1}, A_{2}}(f_{1}, f_{2})$
divides ${\rm Res}_{A_1,  A_2, A_1+ A_2}(f_1,  f_2, J_f^T)$. 
(the indices $[\ZZ^2:\ZZ A_1 +  \ZZ A_2]$
and $[\ZZ^2:\ZZ A_1 + \ZZ A_2 + \ZZ (A_1 + A_2)]$ are equal).

If instead there is a common zero at some $D_\eta$,
this translates into the fact that $f_1^\eta, f_2^\eta$ and $(J_f^T)^\eta = J^T_{f^\eta}$ 
(with obvious definition) have a common solution.
But as $f_i^\eta$ are $\eta$-homogeneous, they satisfy the weighted Euler equalities:
\begin{equation}
\eta_1 x_1\dfrac{\partial f_i^\eta}{\partial x_1} + \eta_2 x_2\dfrac{\partial f_i^\eta}{\partial x_2} = \nu_i^\eta
f_i, \quad i=1,2,
\end{equation}
 from which we deduce that $J^T_{f^\eta}$ lies in the ideal $I(f_1^\eta, f_2^\eta)$ and so,
 the three polynomials will vanish exactly when there is a nontrivial
common zero of $f_1^\eta$ and $f_2^\eta$.
 This implies that all facet resultants $\Res_{A_1^\eta, A_2^\eta}(f_1^\eta, f_2^\eta)$ divide
${\rm Res}_{A_1,  A_2, A_1+ A_2}(f_1,  f_2, J_f^T)$. 

Now, we wish to see that the resultant
$\Res_{A_1^\eta, A_2^\eta}(f_1^\eta, f_2^\eta)$ raised to the power
$\mu(\eta)$ occurs as a factor.
The following argument would be better written in terms of 
the multihomogeneous polynomials in the Cox coordinates of $X$ 
which represent $L_1, L_2, L_J$ \cite{CDS}. 
Fix a primitive inner normal direction $\eta \in \Sigma$ of $A_1+A_2$, 
let $t$ be a new variable and define the following polynomials
\begin{equation}\label{Fi}
F_i(t,x) = \sum_{\alpha \in A_i} c_{i, \alpha} t^{\langle \eta, 
\alpha \rangle - \nu_i^\eta} x^\alpha, \quad i = 1,2,
\end{equation}
so that
\[ F_i(1,x) = f_i (x), \quad F_i(0,x) = f_i^\eta(x), \quad i=1,2,\]
and we can write
\begin{equation}\label{eq:fieta}
f_i^\eta(x) = F_i^\eta(t,x) - t^{\mu_i(\eta)} G_i(t,x), \quad i=1,2,
\end{equation}
where the polynomials $G_i$ are defined by these equalities.
The polynomials $F_1, F_2, J^T_F$ define the sections $L_1, L_2, L_J$.
For each $t$, we deduce from the bilinearity of the determinant, 
that there exists a polynomial $H(t,x)$ such that
the toric Jacobian can be written as $J^T_F = J^T_{f^\eta} + t^{\mu(\eta)} H(t,x)$.
But, as we remarked, $J^T_{f^\eta}$ lies in the ideal $I(f_1^\eta, f_2^\eta)$, and using the equalities~\eqref{eq:fieta}, 
we can write $J^T_F = H_1(t,x) + t^{\mu(\eta)} H_2(t,x)$, with $H_1 \in I(F_1, F_2)$.
Note that if for instance $\eta_1 \not=0$, then the power of $x_1$ in each monomial
of $F_i$ can be obtained from the power of $t$ and the power of $x_2$, that is, we
could use $t$ and $x_2$ as ``variables'' instead.  
We will denote by ${\rm Res}^X$ the resultant
defined over $X$ \cite{CDS}.
Therefore,
\[{\rm Res}_{A_1,  A_2, A_1+ A_2}(f_1,  f_2, J_f^T) = 
{\rm Res}^X_{A_1,  A_2, A_1+ A_2}(F_1,  F_2, t^{\mu(\eta)} H_2).\]
Now, it follows from Theorem~\ref{th:productres} that
\[{\rm Res}^X_{A_1,  A_2, A_1+ A_2}(F_1,  F_2, t^{\mu(\eta)}) = \Res_{A_1^\eta, A_2^\eta}(f_1^\eta, f_2^\eta)^{\mu(\eta)}\]
is a factor of ${\rm Res}_{A_1,  A_2, A_1+ A_2}(f_1,  f_2, J_f^T)$. 
Indeed, no positive power of $t$ divides $H_2$ for generic coefficients. Considering all
possible $\eta \in \Sigma$ we get the desired factorization.
\end{proof}

Theorem \ref{th:main} and the proof will be extended to the general
$n$-variate setting in a forthcoming paper \cite{D13}.
We only state here the following general version without proof. Recall that a
lattice polytope $P$ of dimension $n$ in $\RR^n$ 
is said to be \emph{smooth} if at each every vertex there
are $n$ concurrent facets and their primitive inner normal directions
form a basis of $\ZZ^n$. In particular, integer dilates of the unit simplex or
the unit (hyper)cube are smooth.

\begin{thm}\label{th:BJ}
Let $P \subset \RR^n$ be a smooth lattice polytope of dimension $n$. 
Let $A_i = (d_i P) \cap \ZZ^n$, $i=1, \dots, n$, $d_1, \dots, d_n \in \ZZ_{>0}$, and $f_1, \dots, f_n$ 
polynomials with these supports, respectively. Then, we
have the following factorization
\[{\rm Res}_{A_1, \dots, A_n, A_1+ \dots+ A_n}(f_1, \dots, f_n, J_f^T) = \Delta_{A_{1},\dots, A_{n}}
(f_{1}, \dots, f_{n}) \cdot E,\]
where the factor $E$ equals the finite product:
\[E  =  \prod_{\eta \in \Sigma} \Res_{A_1^\eta, A_2^\eta}(f_1^\eta, f_2^\eta).\]
\end{thm}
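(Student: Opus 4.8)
The plan is to follow the same geometric strategy as in the proof of Theorem~\ref{th:main}, but now in the $n$-dimensional toric variety $X$ associated to the polytope $A_1 + \dots + A_n = (d_1 + \dots + d_n)P$. Since $P$ is smooth, so is $X$, and the toric boundary decomposes as a union of prime divisors $D_\eta$, one for each primitive inner facet normal $\eta \in \Sigma$. First I would observe that $f_1, \dots, f_n, J_f^T$ define sections of globally generated line bundles on $X$ (the Newton polytope of $J_f^T$ is contained in $A_1 + \dots + A_n$ by the remark following Definition~\ref{def:JacT}), so ${\rm Res}_{A_1, \dots, A_n, A_1 + \dots + A_n}(f_1, \dots, f_n, J_f^T)$ vanishes exactly when these sections have a common zero on $X$, either in the open torus $T_X \cong (\CC^*)^n$ or on some $D_\eta$.

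Next I would handle the torus contribution: a common zero of $f_1, \dots, f_n$ and $J_f^T$ in $(\CC^*)^n$ is precisely a multiple root of the system $f_1 = \dots = f_n = 0$ with nonzero coordinates, which forces the mixed discriminant $\Delta_{A_1, \dots, A_n}(f_1, \dots, f_n)$ to vanish; since the relevant lattice indices agree, $\Delta_{A_1, \dots, A_n}$ divides the resultant. Then, for the boundary: on each $D_\eta$ the restricted equations become $f_1^\eta, \dots, f_n^\eta$ together with $(J_f^T)^\eta$. Here the key point, exactly as in the $n=2$ case, is that each $f_i^\eta$ is $\eta$-homogeneous, so the weighted Euler relations $\sum_k \eta_k x_k \,\partial f_i^\eta/\partial x_k = \nu_i^\eta f_i^\eta$ express $(J_f^T)^\eta = J^T_{f^\eta}$ as an element of the ideal $I(f_1^\eta, \dots, f_n^\eta)$ — one sees this by using the Euler relation to replace one column of the toric Jacobian matrix, up to the nonzero scalars $\nu_i^\eta$, by a combination of $f_i^\eta$. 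Consequently the three(-or-more) conditions on $D_\eta$ collapse to the single condition that $f_1^\eta, \dots, f_n^\eta$ have a common nontrivial zero, i.e. that $\Res_{A_1^\eta, \dots, A_n^\eta}(f_1^\eta, \dots, f_n^\eta)$ vanishes. To get the correct multiplicity, I would introduce the deformation $F_i(t,x) = \sum_{\alpha \in A_i} c_{i,\alpha}\, t^{\langle \eta, \alpha\rangle - \nu_i^\eta} x^\alpha$ as in \eqref{Fi}, write $J^T_F = H_1(t,x) + t^{\mu(\eta)} H_2(t,x)$ with $H_1 \in I(F_1, \dots, F_n)$, and apply the multiplicativity of the resultant (Theorem~\ref{th:productres}) over $X$ to extract the factor $\Res^X_{A_1, \dots, A_n, A_1+\dots+A_n}(F_1, \dots, F_n, t^{\mu(\eta)}) = \Res_{A_1^\eta, \dots, A_n^\eta}(f_1^\eta, \dots, f_n^\eta)^{\mu(\eta)}$, using that for generic coefficients no positive power of $t$ divides $H_2$.

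Because $P$ is smooth, at least one of the configurations $A_i = d_i P \cap \ZZ^n$ is full along every facet (in fact all of them are, being complete sets of lattice points in dilates of a lattice polytope), so Remark~\ref{rem:2} and its higher-dimensional analogue give $\mu(\eta) = 1$ for every $\eta \in \Sigma$; this is why the exponents disappear from the statement of Theorem~\ref{th:BJ} and $E$ is simply the product of the facet resultants. Finally, a degree count — comparing the degree of ${\rm Res}_{A_1,\dots,A_n, A_1+\dots+A_n}(f_1,\dots,f_n,J_f^T)$ in the coefficients of each $f_i$ with the degree of $\Delta_{A_1,\dots,A_n}$ plus the sum of the degrees of the facet resultants — would show that no further factors occur, so the divisibility statements combine into the claimed equality up to sign (which one fixes by a specialization).

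I expect the main obstacle to be the higher-dimensional bookkeeping in the boundary analysis: in the $n=2$ case each $D_\eta$ is a point-or-curve situation and the Cox-coordinate description of the sections $L_1, L_2, L_J$ is essentially transparent, but for general $n$ one must carefully track, in the Cox ring of $X$, how the multihomogeneous forms representing $f_1, \dots, f_n, J_f^T$ restrict to each $D_\eta$, verify that the Euler-relation argument still places $J^T_{f^\eta}$ in $I(f_1^\eta, \dots, f_n^\eta)$ when $\dim A_i^\eta$ may be anything from $0$ to $n-1$, and confirm that the deformation argument isolates the power $\mu(\eta)$ cleanly. The degree-count step that rules out extra factors is also more delicate than in dimension two and relies on the smoothness hypothesis through the bidegree/multidegree formula for the mixed discriminant; this is presumably the part deferred to \cite{D13}, which is why only the statement is given here.
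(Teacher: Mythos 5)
There is nothing in the paper to compare your argument against: Theorem~\ref{th:BJ} is explicitly stated \emph{without proof}, the authors deferring the general $n$-variate argument to the forthcoming paper \cite{D13}. Your sketch follows precisely the strategy they indicate, i.e.\ the $n$-dimensional rerun of the proof of Theorem~\ref{th:main}: common zeros of $f_1,\dots,f_n,J_f^T$ on the dense torus of $X$ account for the discriminant factor; on a boundary divisor $D_\eta$ the weighted Euler relations (via the column operation you describe) put $J^T_{f^\eta}$ in the ideal $I(f_1^\eta,\dots,f_n^\eta)$, collapsing the boundary condition to the vanishing of the facet resultant; and the deformation $F_i(t,x)$ of \eqref{Fi} together with Theorem~\ref{th:productres} extracts that resultant with exponent $\mu(\eta)$. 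You also correctly read the factor $E$ as the product of the $n$-ary facet resultants $\Res_{A_1^\eta,\dots,A_n^\eta}(f_1^\eta,\dots,f_n^\eta)$, even though the displayed formula in the statement carries over the $n=2$ notation.

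Two points need repair. First, your justification that all exponents $\mu(\eta)$ equal $1$ rests on fullness of the $A_i$ and a ``higher-dimensional analogue'' of Remark~\ref{rem:2}; that analogue is false for $n\ge 3$ (the empty simplex with vertices $0,e_1,e_2,e_1+e_2+ke_3$ is full, yet its only lattice point off the facet $x_3=0$ sits at lattice height $k$). The correct reason is the smoothness of $P$: at any vertex of a facet $F$ of $d_iP$ the primitive edge directions form a basis of $\ZZ^n$, so the edge leaving $F$ contains a lattice point at height exactly $1$ above $F$; hence $\mu_i(\eta)=1$ for every $i$ and every facet normal $\eta$, and the same vertex bases give the claim, noted after the theorem, that all lattice indices equal $1$. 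Second, be careful with your concluding ``degree count'': within this paper's logic the (multi)degree of the mixed discriminant is a \emph{consequence} of the factorization (Corollary~\ref{cor:main2} is derived from Theorem~\ref{th:main}), so invoking a multidegree formula for $\Delta_{A_1,\dots,A_n}$ to rule out extra factors risks circularity unless you import it from an independent source such as \cite{MD}. The route consistent with the proof of Theorem~\ref{th:main} is instead to argue that the zero locus of the resultant on $X$ is exactly the union of the discriminantal hypersurface and the facet loci, so the resultant is a product of powers of these (irreducible, under the cycle convention) polynomials, and then to pin down the exponents by the $t$-deformation together with the genericity statement that no positive power of $t$ divides $H_2$; this is presumably the bookkeeping carried out in \cite{D13}.
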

Note that all the exponents in $E$ equal $1$ and all the lattice indices equal $1$.

When the given lattice configurations $A_i$ are the lattice points 
$d_i \sigma$ of the $d_i$-th dilate of the standard simplex
$\sigma$ in $\RR^n$, (that is, in the homogeneous case studied in~\cite{BJ}), 
formula~\eqref{eq:BJ} gives for any $n$ in our notation:
\begin{align*}
&{\Res_{d_1 \sigma, \dots, d_n \sigma, \delta \sigma}(f_{1},\dots,f_{n},J_{i})} = &\\
&\Delta_{d_{1} \sigma,\dots,d_{n} \sigma} (f_1, \dots, f_n)\cdot 
{\Res_{(d_1 \sigma)^{e_i}, \dots, (d_n \sigma)^{e_i}}
	(f_{1}^{e_i},\dots,f_{n}^{e_i})} ,&
\end{align*}
where $e_0, \dots, e_n$ are the canonical basis vectors
(or $e_0 = -e_1 -\dots - e_n$, if we consider
the corresponding dehomogenized polynomials, by setting $x_0=1$).
Note that Theorem~\ref{th:BJ} gives the following more symmetric formula:

\begin{cor}\label{cor:BJ}
With the previous notation, it holds:
\begin{align*}
&{\Res_{d_1 \sigma, \dots, d_n \sigma , (d_1 +\dots +d_n) \sigma}
(f_{1},\dots,f_{n},J_{f}^T)} =&\\ 
&\Delta_{d_{1} \sigma,\dots,d_{n}\sigma}(f_1, \dots, f_n) 
\cdot \prod_{i=0}^n{\Res_{(d_1 \sigma)^{e_i}, \dots, 
(d_n \sigma)^{e_i}}(f_{1}^{e_i},\dots,f_{n}^{e_i})}.&
\end{align*}
\end{cor}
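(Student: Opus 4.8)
The plan is to obtain Corollary~\ref{cor:BJ} as the special case $P=\sigma$ of Theorem~\ref{th:BJ}, after making the combinatorial data on its right-hand side explicit. First I check that the standard simplex $\sigma\subset\RR^n$ is a smooth lattice polytope: at the origin the $n$ concurrent facets $\{x_j=0\}$ have primitive inner normals $e_1,\dots,e_n$, a lattice basis, while at each other vertex $e_k$ the concurrent facets are $\{x_j=0\}_{j\neq k}$ together with $\{x_1+\dots+x_n=1\}$, whose primitive inner normals $\{e_j:j\neq k\}\cup\{e_0\}$, with $e_0=-e_1-\dots-e_n$, have determinant $\pm1$ and hence form a basis. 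So Theorem~\ref{th:BJ} applies with $A_i=(d_i\sigma)\cap\ZZ^n$.

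Next I identify the pieces. Since $\mathrm{conv}(A_1+\dots+A_n)=(d_1+\dots+d_n)\sigma$, its set $\Sigma$ of primitive inner facet normals is exactly $\{e_0,e_1,\dots,e_n\}$; for $\eta=e_i$ the face $A_k^{e_i}$ is the facet of $d_k\sigma$ minimizing $\langle e_i,\cdot\rangle$, i.e. $\{x_i=0\}$ if $i\ge1$ and $\{x_1+\dots+x_n=d_k\}$ if $i=0$, so $A_k^{e_i}=(d_k\sigma)^{e_i}$ and $f_k^{e_i}$ is the truncation appearing in the statement. Because $\sigma$, hence every $d_k\sigma$, is full, the exponents $\mu(\eta)$ and all lattice indices occurring in $E$ equal $1$, as recorded right after Theorem~\ref{th:BJ}. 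Substituting these identifications into the conclusion of Theorem~\ref{th:BJ} gives exactly
\[
\Res_{d_1\sigma,\dots,d_n\sigma,(d_1+\dots+d_n)\sigma}(f_1,\dots,f_n,J_f^T)=\Delta_{d_1\sigma,\dots,d_n\sigma}(f_1,\dots,f_n)\cdot\prod_{i=0}^n\Res_{(d_1\sigma)^{e_i},\dots,(d_n\sigma)^{e_i}}(f_1^{e_i},\dots,f_n^{e_i}),
\]
which is the assertion.

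Since Theorem~\ref{th:BJ} is stated here without proof, I would also supply the self-contained argument, which is its $P=\sigma$ instance and runs parallel to the proof of Theorem~\ref{th:main}. Work on $X=\PP^n$, the toric variety of $\sigma$. Homogenizing, the section of $\mathcal{O}(d_1+\dots+d_n)$ attached to $J_f^T$ is $X_1\cdots X_n\,J_0$, with $J_0(X_0,\dots,X_n)$ the maximal minor of the homogeneous Jacobian of $F_1,\dots,F_n$ obtained by deleting the column $\partial/\partial X_0$. Analyzing the common zero set of $\{F_1,\dots,F_n,\,X_1\cdots X_n J_0\}$: in the torus $X_1\cdots X_n J_0$ vanishes iff $J_0$ does, and on $V(F_1,\dots,F_n)$ Euler's identity makes $(X_0,\dots,X_n)$ a kernel vector of the Jacobian matrix, so comparing it with the cofactor kernel vector $\bigl((-1)^jJ_j\bigr)_j$ shows $J_0$ vanishes there exactly where the Jacobian matrix drops rank below $n$, i.e. on the mixed-discriminant locus; this contributes the factor $\Delta$. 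On the coordinate divisor $\{X_i=0\}$ the section vanishes on $\{f_1^{e_i}=\dots=f_n^{e_i}=0\}$ — for $i\ge1$ because of the monomial factor, and for $i=0$ because $J_0|_{X_0=0}$ is the Jacobian determinant of the leading forms, which vanishes on their common zeros by Euler's identity in $X_1,\dots,X_n$ — contributing each facet resultant. The \emph{main obstacle} is to show that these exhaust the factors and each occurs with multiplicity one: this is a bidegree count, using the bidegree of $\Delta$ from \cite{MD} (equivalently Corollary~\ref{cor:main2}), the classical bidegrees of the facet resultants (resultants of $n$ forms of degrees $d_1,\dots,d_n$), and the fact that $J_f^T$ is linear in the coefficients of each $f_k$; the multiplicity-one statement is precisely where fullness of $\sigma$ enters, forcing every separating power $t^{\mu(\eta)}$ in the deformation $F_i(t,x)$ of the proof of Theorem~\ref{th:main} to be $t$, so that Theorem~\ref{th:productres} contributes each facet resultant only once.
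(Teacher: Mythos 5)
Your derivation is correct and is essentially the paper's own route: Corollary~\ref{cor:BJ} is read off from Theorem~\ref{th:BJ} by specializing to $P=\sigma$, checking smoothness of the simplex, identifying $\Sigma=\{e_0,e_1,\dots,e_n\}$ with the corresponding facet truncations $f_k^{e_i}$, and using that all exponents $\mu(\eta)$ and lattice indices equal $1$. Your supplementary sketch of Theorem~\ref{th:BJ} itself (the $\PP^n$ analysis with the multiplicity-one bidegree count) goes beyond the paper, which states that theorem without proof and defers it to \cite{D13}, so it is extra rather than part of the comparison.
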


It is straightforward to deduce from this expression the degree of 
the homogeneous mixed discriminant, obtained independently in
\cite{OB,BJ,N}.
Similar formulas can be obtained, for instance, in the multihomogeneous case.

\smallskip

We recall the following definition from~\cite{MD}.
If $v$ is a vertex of $A_{i}$, 
we define its \textit{mixed multiplicity} as
\begin{equation}
mm_{A_1,A_2}(v):=MV(Q_{1},Q_{2})-MV(C_{i},Q_{j}),\quad \{i,j\}=\{1,2\}, 
\end{equation}
where $C_{i}=conv(A_{i}-\{v\})$. 

Let $\Sigma'\subset \Sigma$ be the set of inner normals
of $A_1+A_2$ that cut out, or define, edges $e_i^{\eta}$ in both $Q_1, Q_2$.
The factorization formula in Theorem~\ref{th:main} can be written as follows,
and allows us to recover the
bidegree formulas for planar mixed discriminants in~\cite{MD}.  

\begin{cor}\label{cor:main2} 
Let $A_1, A_2$ be two lattice configurations of dimension $2$ in the plane, and let $f_1, f_2$ be
polynomials with these respective supports. 
Then, the resultant of $f_1,f_2$ and their toric Jacobian,
namely $\Res_{A_1, A_2,A_1+A_2} (f_1, f_2, J_f^T)$,
factors as follows:
\begin{equation}\label{eq:mm}
\Delta_{A_1, A_2}(f_1, f_2) \cdot 
\prod_{v \text{ vertex of } A_1 \text{ or } A_2} c_v^{\rm mm_{A_1, A_2}(v)}
\cdot
\prod_{\eta \in \Sigma'} {\Res_{A_1^\eta, A_2^\eta}(f_1^\eta, f_2^\eta)}^{\mu(\eta)}.
\end{equation}
The bidegree $(\delta_{1},\delta_{2})$ of the mixed discriminant
$\Delta_{A_1, A_2}(f_1, f_2)$ 
in the coefficients of $f_1$ and $f_2$, respectively, is then given
by the following:
\begin{equation}\label{eq:bidegree}
\mbox{Vol}(Q_{j})+2 \cdot MV(Q_{1},Q_{2})-\sum_{\eta \in \Sigma'} 
 \ell(e_j^\eta)  \cdot \mu(\eta)-
 \sum_{v \text{ vertex of } (A_i)}mm_{A_1, A_2}(v),
\end{equation}
 where $i\in\{1,2\}, \, i \ne j$.
\end{cor}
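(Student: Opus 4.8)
The plan is to deduce both assertions from Theorem~\ref{th:main}: first rewrite the correction factor $E=\prod_{\eta\in\Sigma}\Res_{A_1^\eta,A_2^\eta}(f_1^\eta,f_2^\eta)^{\mu(\eta)}$ in the shape of \eqref{eq:mm}, and then read off the bidegree by comparing, in \eqref{eq:mm}, the degrees in the coefficients of $f_i$ on the two sides.

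For the first step I would split $\Sigma=\Sigma'\sqcup(\Sigma\setminus\Sigma')$. The factors with $\eta\in\Sigma'$ are kept as they are, contributing $\prod_{\eta\in\Sigma'}\Res_{A_1^\eta,A_2^\eta}(f_1^\eta,f_2^\eta)^{\mu(\eta)}$. For $\eta\in\Sigma\setminus\Sigma'$ the face of $A_1+A_2$ in direction $\eta$ is an edge in one configuration and a single vertex $v$ in the other, say $A_i^\eta=\{v\}$; then $f_i^\eta=c_v x^v$ is a monomial, and Remark~\ref{rem:2} gives $\Res_{A_1^\eta,A_2^\eta}(f_1^\eta,f_2^\eta)=c_v^{\ell(e_j^\eta)}$, so that the corresponding factor of $E$ is $c_v^{\ell(e_j^\eta)\mu(\eta)}$. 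Since the normals $\eta$ with $A_i^\eta=\{v\}$ are exactly those lying in the relative interior of the normal cone of $v$ in $Q_i$, grouping these factors by $v$ rewrites $\prod_{\eta\in\Sigma\setminus\Sigma'}\Res_{A_1^\eta,A_2^\eta}(f_1^\eta,f_2^\eta)^{\mu(\eta)}$ as a product $\prod_v c_v^{m(v)}$ over the vertices of $A_1$ and of $A_2$, with $m(v)=\sum_\eta\ell(e_j^\eta)\mu(\eta)$ (sum over those $\eta$), and it remains to identify $m(v)$ with the mixed multiplicity $mm_{A_1,A_2}(v)$.

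I would establish this exponent identity by a mixed-volume computation. Write $Q_i=C_i\cup R_v$, where $C_i=conv(A_i\setminus\{v\})$ and $R_v$ is the ``ear'' of $Q_i$ at $v$ — a triangle with apex $v$ (more generally a union of triangles with common apex $v$, whose far edges are the edges of $C_i$ facing $v$ and two of whose edges at $v$ are the two edges of $Q_i$ at $v$). Because mixed volume is a valuation in each argument and $MV(C_i\cap R_v,Q_j)=0$ (a lower-dimensional intersection), one gets $mm_{A_1,A_2}(v)=MV(Q_1,Q_2)-MV(C_i,Q_j)=MV(R_v,Q_j)$. Evaluating $MV(R_v,Q_j)$ by the polygonal formula $MV(P,Q)=\sum_\eta\ell(e_P^\eta)\,h_Q(\eta)$, the sum running over the primitive outer normals of the edges of $P$ with $h_Q$ the support function, and regrouping the terms over the normals lying in the cone of $v$ — where the lattice distances from $v$ to $C_i$ and from the edges $e_j^\eta$ into $A_j$ are what produce the factors $\mu_1(\eta),\mu_2(\eta)$ — one recovers exactly $\sum_\eta\ell(e_j^\eta)\mu(\eta)$. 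When both $A_i$ are full all $\mu(\eta)=1$ and this is immediate; in general, pinning down the exponents $\mu(\eta)$ term by term is the main technical obstacle, and it can alternatively be taken from the mixed-multiplicity computations in \cite{MD}. This proves \eqref{eq:mm}.

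Finally, \eqref{eq:bidegree} follows by taking degrees in the coefficients of $f_i$ on both sides of \eqref{eq:mm} (with $j\neq i$). On the left, $J_f^T$ is bilinear in the coefficients of $f_1$ and of $f_2$ — it is a $2\times2$ determinant with one row coming from each $f_k$ — so the degree of $\Res_{A_1,A_2,A_1+A_2}(f_1,f_2,J_f^T)$ in the coefficients of $f_i$ is the degree of the sparse resultant in its $i$-th block of coefficients plus its degree in the block corresponding to the support $A_1+A_2$, namely $MV(Q_j,Q_1+Q_2)+MV(Q_1,Q_2)$; multilinearity of mixed volume turns $MV(Q_j,Q_1+Q_2)$ into $MV(Q_1,Q_2)+\mbox{Vol}(Q_j)$, so the left-hand degree is $\mbox{Vol}(Q_j)+2\,MV(Q_1,Q_2)$. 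On the right, $\Delta_{A_1,A_2}(f_1,f_2)$ contributes $\delta_i$, the product $\prod_v c_v^{mm_{A_1,A_2}(v)}$ contributes $\sum_{v\ \text{vertex of}\ A_i}mm_{A_1,A_2}(v)$ (only vertices of $A_i$ yield a coefficient of $f_i$), and each $\eta\in\Sigma'$ contributes $\ell(e_j^\eta)\,\mu(\eta)$, since the degree of a resultant of two polynomials on the line $L^\eta$ in the coefficients of one of them equals the lattice length of the Newton polytope of the other. Equating the two expressions and solving for $\delta_i$ yields \eqref{eq:bidegree}, the two components of the bidegree coming from the choices $i=1,j=2$ and $i=2,j=1$.
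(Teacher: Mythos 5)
Your overall route is the same as the paper's: invoke Theorem~\ref{th:main}, split $\Sigma$ into $\Sigma'$ and its complement, use Remark~\ref{rem:2} to turn each factor attached to an $\eta\notin\Sigma'$ into a power $c_v^{\ell(e_j^\eta)\mu(\eta)}$ of a vertex coefficient, group these by vertex, and then obtain \eqref{eq:bidegree} by comparing degrees; your degree bookkeeping (degree $MV(Q_j,Q_1+Q_2)+MV(Q_1,Q_2)=\mbox{Vol}(Q_j)+2\,MV(Q_1,Q_2)$ on the left, bidegree $(\ell(e_2^\eta),\ell(e_1^\eta))$ for each facet resultant on the right) is exactly the computation in the paper. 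The genuine gap is in your self-contained justification of the vertex-exponent identity $\sum_\eta \ell(e_j^\eta)\mu(\eta)=mm_{A_1,A_2}(v)$. The valuation step rests on the claim that $MV(C_i\cap R_v,Q_j)=0$ because the intersection is lower dimensional; but in the plane the mixed volume of a segment with a two-dimensional polytope is positive (already $MV([0,e_1],[0,e_2])=1$), so these correction terms cannot be dropped, and the identity $mm_{A_1,A_2}(v)=MV(R_v,Q_j)$ you deduce from it is false in general. Concretely, let $A_1$ be the unit square, $A_2=\{(0,0),(1,0),(0,1)\}$ the unit triangle, and $v=(1,1)$: then $MV(Q_1,Q_2)=2$, $C_1$ is the unit triangle, so $mm_{A_1,A_2}(v)=2-1=1$, whereas $R_v$ is the triangle with vertices $(1,0),(1,1),(0,1)$ and $MV(R_v,Q_2)=2$. (The value $1$ is what \eqref{eq:mm} requires: the only $\eta\notin\Sigma'$ in the normal cone of $v$ is $\eta=(-1,-1)$, with $\ell(e_2^\eta)=\mu(\eta)=1$.) Moreover, when $R_v$ is a union of several triangles with apex $v$ even its "mixed volume" needs inclusion--exclusion with further segment corrections, and a quantity built only from $R_v$ and $Q_j$ cannot by itself produce the exponent $\mu(\eta)=\min\{\mu_1(\eta),\mu_2(\eta)\}$, which mixes lattice distances measured in both configurations. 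So the subsequent "regrouping" step recovering $\sum_\eta\ell(e_j^\eta)\mu(\eta)$ is unsupported.

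Your fallback, namely importing the identity $mm_{A_1,A_2}(v)=\sum_{\eta}\ell(e_j^{\eta})\,\mu(\eta)$ from the mixed-multiplicity computations in \cite{MD}, is legitimate and is in fact precisely what the paper does: it cites the proof of Proposition~3.13 in \cite{MD} to write $mm(v)=\sum_{\eta'\in\mathcal{E}(v)}\ell(e_2^{\eta'})\mu'(\eta')$ and then identifies $\mu'(\eta')=\mu(\eta')$. With that replacement (and discarding the flawed ear argument), the rest of your proof is correct and essentially identical to the paper's.
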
 

\begin{proof}
To prove equality \eqref{eq:mm}, we need to show by Theorem \ref{th:main} that the factor 
\[ E = \prod_{\eta \in \Sigma} \Res_{A_1^\eta, A_2^\eta}(f_1^\eta, f_2^\eta)^{\mu(\eta)}
\]
equals the product
\[
\prod_{v \text{ vertex of } A_1 \text{ or } A_2} c_v^{\rm mm_{A_1, A_2}(v)}
\cdot
\prod_{\eta \in \Sigma'} {\Res_{A_1^\eta, A_2^\eta}(f_1^\eta, f_2^\eta)}^{\mu(\eta)}.
\]
When $\eta \in \Sigma'$, i.e.\ $\eta$ is a common
inner normal to edges of both $Q_i$, we get the same factor on both terms, since  
that our quantity $\mu(\eta)$ equals the index
$\min \{u(e_1(\eta), A_1), u(e_2(\eta), A_2)\}$, in the notation of~\cite{MD}.

Assume then that $\eta$ is only an inner normal to $Q_2$.
So, $A_1^\eta$ is a vertex $v$, $f_1^\eta = c x^v$ 
is a monomial (with coefficient $c$) and $f_2^\eta$ is a polynomial 
whose support equals the edge $e_2^\eta$ of $A_2$ orthogonal to $\eta$. 
In this case, $\Res_{A_1^\eta,A_2^\eta}(f_1^\eta, f_2^\eta) = 
c^{\ell(f_\eta)}$ by Remark~\ref{re:monomial}.

For such a vertex $v$,  denote by ${\mathcal E}(v)$ the set of those $\eta' \notin \Sigma'$
for which $v + e_2^{\eta'}$ is an edge of $Q_1 + Q_2$.
Note that it follows from the proof of \cite[Prop.3.13]{MD} (cf 
in particular Figure~1 there), that there exist non negative
integers $\mu'(\eta')$ such that
\[
{\rm mm}(v) = \sum_{\eta' \in\ {\mathcal E}(v)} \ell(e_2^{\eta'}) \cdot \mu'(\eta').
\]
Indeed, $\mu(\eta') =\mu'(\eta')$.

To compute the bidegree, we use the multilinearity of the mixed volume
with respect to Minkowski sum. 
Observe that the toric Jacobian has bidegree $(1,1)$ in
the coefficients of $f_1, f_2$, from which we get that the bidegree of 
the resultant
$\Res_{A_1, A_2,A_1+A_2} (f_1, f_2, J_f^T)$ is equal to
\begin{equation}\label{eq:degres}
(2 MV(A_1, A_2) + \mbox{Vol}(Q_2), \; 2 MV(A_1, A_2) + \mbox{Vol}(Q_1)).
\end{equation}
Substracting the degree of the other factors and taking into account that
the bidegree of the resultant $\Res_{A_1^\eta, A_2^\eta}(f_1^\eta, f_2^\eta)$
equals $(\ell(e_2^\eta), \ell(e_1^\eta))$, we deduce
the formula~\eqref{eq:bidegree}, as desired.
\end{proof}

\section{The multiplicativity of the mixed discriminant}\label{sec:mult}

This section studies the factorization of the discriminant when one
of the polynomials factors.
We make the hypothesis that $f'_{1}, f''_{1}, f_2$ have fixed support sets,
and $A'_{1},A''_{1},A_{2}\subseteq \mathbb{Z}^{2}$. 
So $f_{1}=f'_{1}\cdot f''_{1}$ has support in the Minkowski sum
$A_1 :=  A'_{1} + A''_{1}$; in fact, its support is generically
equal to $A_1$. 
We will denote by ${\mu'}(\eta)$ (resp. ${\mu''}(\eta)$) the integer defined in \eqref{eq:mu},
with $A_1$ replaced by $A'_1$ (resp. $A''_1$). 

\begin{cor} \label{cor:factor} 
Assume $A'_{1}, A''_{1}$ and $A_{2}$ are full
planar configurations of dimension $2$. 
Let $f'_1, f''_1, f_2$ be generic polynomials with these supports and let $f_1 = f'_1 \cdot f''_1$.
Then,
\[\Delta_{A_{1},A_{2}}(f_{1},f_{2})=\Delta_{A^{'}_{1},A_{2}}(f^{'}_{1},f_{2})
\cdot \Delta_{A^{''}_{1},A_{2}}(f^{''}_{1},f_{2})\cdot 
\Res_{A^{'}_{1},A^{''}_{1},A_{2}}(f^{'}_{1},f^{''}_{1},f_{2})^{2} \cdot E,\] 
where $E$ equals the following product:
\begin{equation}
\prod_{\eta \in \Sigma} \Res_{(A'_1)^\eta, A_2^\eta}((f'_1)^\eta, f_2^\eta)^{{\mu'}(\eta)-\mu(\eta)}
\cdot \Res_{(A''_1)^\eta, A_2^\eta}((f''_1)^\eta, f_2^\eta)^{{\mu''}(\eta)-\mu(\eta)}.
\end{equation}
\end{cor}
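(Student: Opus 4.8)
The plan is to derive Corollary~\ref{cor:factor} by applying Theorem~\ref{th:main} three times and comparing the resulting factorizations. First I would write down Theorem~\ref{th:main} for the pair $(f_1, f_2)$ with $f_1 = f'_1 f''_1$ and support $A_1 = A'_1 + A''_1$, obtaining
\[
\Res_{A_1, A_2, A_1+A_2}(f_1, f_2, J_f^T) = \Delta_{A_1, A_2}(f_1, f_2) \cdot \prod_{\eta \in \Sigma} \Res_{A_1^\eta, A_2^\eta}(f_1^\eta, f_2^\eta)^{\mu(\eta)},
\]
and likewise for $(f'_1, f_2)$ and $(f''_1, f_2)$ with their own toric Jacobians and their own edge-normal sets, which (since $A_1 = A'_1 + A''_1$ and all three configurations are full two-dimensional, so the Minkowski sums $A'_1+A_2$, $A''_1+A_2$, $A_1+A_2$ have the same normal directions generically) can all be indexed by the same $\Sigma$. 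The key algebraic input is to relate the left-hand side $\Res_{A_1, A_2, A_1+A_2}(f_1, f_2, J_f^T)$ to the resultants attached to $f'_1$ and $f''_1$. Here I would expand the toric Jacobian: since $f_1 = f'_1 f''_1$, Leibniz gives $x_k \partial f_1/\partial x_k = f''_1 \cdot x_k \partial f'_1/\partial x_k + f'_1 \cdot x_k \partial f''_1/\partial x_k$, so by multilinearity of the determinant in its rows,
\[
J_f^T = f''_1 \cdot J^T_{(f'_1, f_2)} + f'_1 \cdot J^T_{(f''_1, f_2)},
\]
where $J^T_{(f'_1,f_2)}$ is the toric Jacobian of the pair $(f'_1, f_2)$ and similarly for the other term.

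Next I would feed this identity into the resultant on the left. Using the multiplicativity of the resultant in one argument (Theorem~\ref{th:productres}) together with the fact that on the zero set of $f'_1$ one has $J_f^T = f''_1 \cdot J^T_{(f'_1,f_2)}$, and on the zero set of $f''_1$ one has $J_f^T = f'_1 \cdot J^T_{(f''_1,f_2)}$, one shows that
\[
\Res_{A_1, A_2, A_1+A_2}(f_1, f_2, J_f^T)
\]
is, up to the monomial/extraneous resultant factors coming from Remark~\ref{re:monomial} and Theorem~\ref{th:productres}, equal to
\[
\Res_{A'_1, A_2, \bullet}(f'_1, f_2, f''_1 J^T_{(f'_1,f_2)}) \cdot \Res_{A''_1, A_2, \bullet}(f''_1, f_2, f'_1 J^T_{(f''_1,f_2)}) \cdot \Res_{A'_1, A''_1, A_2}(f'_1, f''_1, f_2)^{2},
\]
where the square on the cross-resultant arises because the common zeros of $f'_1$ and $f''_1$ are counted in both the first and second factors (each contributes once through the ``$f''_1$'' resp. ``$f'_1$'' appearing inside the Jacobian slot). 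Splitting off the $f''_1$ and $f'_1$ factors inside each resultant via Theorem~\ref{th:productres} again, and recognizing $\Res_{A'_1, A_2, A'_1+A_2}(f'_1, f_2, J^T_{(f'_1,f_2)})$ and $\Res_{A''_1, A_2, A''_1+A_2}(f''_1, f_2, J^T_{(f''_1,f_2)})$, I can then substitute Theorem~\ref{th:main} for each of the two sub-pairs. This expresses the left-hand side as $\Delta_{A'_1,A_2}(f'_1,f_2) \cdot \Delta_{A''_1,A_2}(f''_1,f_2) \cdot \Res_{A'_1,A''_1,A_2}(f'_1,f''_1,f_2)^2$ times a product of facet resultants of $(f'_1)^\eta, (f''_1)^\eta, f_2^\eta$, with exponents that are combinations of $\mu'(\eta)$, $\mu''(\eta)$ and the extra monomial exponents from the $f'_1$/$f''_1$ insertions.

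Finally, I would carry out the bookkeeping of exponents: combining the above with the $(f_1,f_2)$-version of Theorem~\ref{th:main} and cancelling $\Res_{A_1^\eta, A_2^\eta}(f_1^\eta, f_2^\eta)^{\mu(\eta)} = \big(\Res_{(A'_1)^\eta, A_2^\eta}((f'_1)^\eta, f_2^\eta)\Res_{(A''_1)^\eta, A_2^\eta}((f''_1)^\eta, f_2^\eta)\big)^{\mu(\eta)}$ (using multiplicativity of the one-dimensional edge resultant since $f_1^\eta = (f'_1)^\eta (f''_1)^\eta$), one is left with exactly
\[
E = \prod_{\eta \in \Sigma} \Res_{(A'_1)^\eta, A_2^\eta}((f'_1)^\eta, f_2^\eta)^{\mu'(\eta) - \mu(\eta)} \cdot \Res_{(A''_1)^\eta, A_2^\eta}((f''_1)^\eta, f_2^\eta)^{\mu''(\eta) - \mu(\eta)},
\]
as claimed; the fullness hypothesis guarantees (via Remark~\ref{rem:2}) that all relevant lattice indices are $1$ so no extra index powers intrude, and also that whenever $(f'_1)^\eta$ or $f_2^\eta$ is a monomial the corresponding facet resultant is literally a power of a coefficient and the exponents $\mu'(\eta)-\mu(\eta)$ are genuinely nonnegative. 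The main obstacle I anticipate is the careful accounting in the expansion $J_f^T = f''_1 J^T_{(f'_1,f_2)} + f'_1 J^T_{(f''_1,f_2)}$: one must argue that, generically, the resultant $\Res_{A_1,A_2,A_1+A_2}(f_1,f_2,J_f^T)$ really does split according to which of the two summands of $J_f^T$ vanishes at a given common zero of $f_1, f_2$ — i.e.\ that there is no unexpected cancellation and that the ``diagonal'' contribution from simultaneous zeros of $f'_1$ and $f''_1$ is correctly counted with multiplicity two — which is most cleanly handled by working, as in the proof of Theorem~\ref{th:main}, with the Cox homogeneous coordinates of the toric variety $X$ attached to $A_1+A_2$ and invoking Theorem~\ref{th:productres} in that setting, together with a degree count (using \eqref{eq:degres}) to confirm that the exponents on both sides match and hence that $E$ has the stated form with no leftover factors.
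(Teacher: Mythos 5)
Your proposal is correct and follows essentially the same route as the paper's proof: write the discriminants via Theorem~\ref{th:main}, decompose $J_f^T = f''_1 J^T_{f'_1,f_2} + f'_1 J^T_{f''_1,f_2}$, use Theorem~\ref{th:productres} together with the fact that the resultant is unchanged when a multiple of one entry is added to another to factor the numerator as the two sub-Jacobian resultants times $\Res_{A'_1,A''_1,A_2}(f'_1,f''_1,f_2)^2$, and then cancel the edge resultants using $f_1^\eta=(f'_1)^\eta (f''_1)^\eta$ and the exponents $\mu(\eta),\mu'(\eta),\mu''(\eta)$. One small slip in the write-up: your intermediate display lists $\Res_{A'_1,A''_1,A_2}(f'_1,f''_1,f_2)^2$ as a separate factor alongside the two resultants that still carry $f''_1 J^T_{(f'_1,f_2)}$ and $f'_1 J^T_{(f''_1,f_2)}$ in their third slots, which double-counts that square (it only appears after splitting those slots, as your own explanation and final bookkeeping correctly indicate).
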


\begin{proof} 

By Theorem~\ref{th:main}, we get that
\begin{equation}\label{deltafrac}
\Delta_{A_{1},A_{2}}(f_{1},f_{2})=\frac{R_{A_{1},A_{2},A_{1}+A_{2}}(f_{1},f_{2},J_{f}^{T})}
{\prod\limits_{\eta\in \Sigma} R_{A_{1}^{\eta},A_{2}^{\eta}}(f_{1}^{\eta},f_{2}^{\eta})^{\mu(\eta)}},
\end{equation}
and similarly for $\Delta_{A'_{1},A_{2}}(f'_{1},f_{2})$ and $\Delta_{A''_{1},A_{2}}(f''_{1},f_{2})$.
Let us write the numerator of~(\ref{deltafrac}) as follows:
\[
R_{A'_{1}+A''_{1},A_{2},A'_{1}+A''_{1}+A_{2}}
	(f'_{1}f''_{1},f_{2},J_{f'_{1}f''_{1},f_{2}}^{T}), 
\]
where $J^{T}_{f_{1}'f_{1}'',f_{2}} =
f_{1}'J^{T}_{f_{1}''f_{2}}+f_{1}''J^{T}_{f_{1}',f_{2}}$. 
Let us apply Theorem~\ref{th:productres} to re-write it as follows:
\[
R_{A'_{1},A_{2},A'_{1}+A''_{1}+A_{2}}
 (f'_{1},f_{2}, J^{T}_{f_{1}'f_{1}'',f_{2}} ) \,
R_{A''_{1},A_{2},A'_{1}+A''_{1}+ A_{2}}
 (f''_{1},f_{2}, J^{T}_{f_{1}'f_{1}'',f_{2}} )
=\]
\[=
R_{A'_{1},A_{2},A'_{1}+A''_{1}+A_{2}}
 (f'_{1},f_{2},f''_{1}J_{f'_{1},f_{2}}^{T}) \,
R_{A''_{1},A_{2},A'_{1}+A''_{1}+A_{2}}
 (f''_{1},f_{2},f'_{1}J_{f''_{1},f_{2}}^{T}),
\]
because the resultant of $\{h_1,h_2+ g h_1,\dots\}$ equals
the resultant of $\{h_1,h_2,\dots\}$, for any choice of
polynomials $h_1, h_2, g$ (with suitable supports).
We employ again Theorem~\ref{th:productres} to finalize
the numerator as follows: 
\[
R_{A'_{1},A_{2},A'_{1}+A_{2}}(f'_{1},f_{2},J_{f'_{1},f_{2}}^{T})
\cdot
R_{A''_{1},A_{2},A''_{1}+A_{2}}(f''_{1},f_{2},J_{f''_{1},f_{2}}^{T})
\cdot
R_{A^{'}_{1},A^{''}_{1},A_{2}}(f^{'}_{1},f^{''}_{1},f_{2})^{2}.
\]

For the denominator of~(\ref{deltafrac}), we
use again Theorem~\ref{th:productres} to write:
\begin{align*}
&\prod\limits_{\eta\in\Sigma'}R_{{A'_{1}}^{ \eta},A_{2}^{\eta}}({f'_{1}}^{\eta},f_{2}^{\eta})^{{\mu'}(\eta)}\cdot
\prod\limits_{\eta\in\Sigma''} R_{{A''_{1}}^{\eta},A_{2}^{\eta}}({f''_{1}}^{\eta},f_{2}^{\eta})^{{\mu''}(\eta)} =&\\
&\prod\limits_{\eta\in \Sigma} R_{{A'_{1}}^{ \eta}+{A''_{1}}^{\eta},A_{2}^{\eta}}({f'}_{1}^{\eta}{f''}_{1}^{\eta},f_{2}^{\eta})^{\mu(\eta)}
\cdot E,&
\end{align*}
because the products
\[\prod\limits_{\eta\in\Sigma \setminus \Sigma'}R_{{A'_{1}}^{ \eta},A_{2}^{\eta}}({f'_{1}}^{\eta},f_{2}^{\eta})^{{\mu'}(\eta)}=
\prod\limits_{\eta\in\Sigma \setminus \Sigma''} R_{{A''_{1}}^{\eta},A_{2}^{\eta}}({f''_{1}}^{\eta},f_{2}^{\eta})^{{\mu''}(\eta)} = 1, \]
since ${f'_{1}}^{\eta},f_{2}^{\eta}$ (resp. ${f''_{1}}^{\eta},f_{2}^{\eta}$) are both monomials.
To conclude the proof, simply assemble the above equations.
\end{proof}

As a consequence, we have $\deg_{A_{1},A_{2}}\Delta(f_{1},f_{2})=$
\[
=\deg_{A'_{1},A_{2}}\Delta(f'_{1},f_{2})+\deg_{A''_{1},A_{2}}
\Delta(f''_{1},f_{2})+2\cdot \deg_{A'_{1},A''_{1},A_{2}} R(f'_{1},f''_{1},f_{2}) - \deg(E).
\]

When all the configurations are full and with the same normal fan,
all the exponents ${\mu}(\eta) ={\mu'}(\eta)= {\mu''}(\eta)= 1$. 
Therefore,  $E=1$  and no extra factor occurs.

We define $\mu'_1(\eta), \mu''_1(\eta)$ as in \eqref{eq:mui}. Indeed, we now fix
$\eta$ and will simply write $\mu'_1, \mu''_1, \mu_1, \mu_2$.
It happens that only one of the factors associated to $\eta$ 
can occur in $E$ with non zero coefficient.
More explicitly, we have the following corollary, whose proof is straightforward.

\begin{cor}\label{cor:f}
With the notations of Corollary \ref{cor:factor}, 
for any $\eta \in \Sigma$ it holds that:
\begin{itemize}
\item If $\mu'_1= \mu''_1$, then $\mu' = \mu''=\mu$ and 
there is no factor in $E$ ``coming from $\eta$''.
\item  If $\mu'_1\neq \mu''_1$, assume wlog that $\mu_1 =\mu'_1 < \mu''_1$. 
There are three subcases:
\begin{itemize}
\item
If $\mu_2 \le  \mu_1$, again there is no factor in $E$ ``coming from $\eta$''.
\item
If $\mu_1 =\mu'_1 < \mu_2 < \mu''_1$, then the resultant
$\Res_{(A'_1)^\eta, A_2^\eta}((f'_1)^\eta, f_2^\eta)$ does not occur,
but $\Res_{(A''_1)^\eta, A_2^\eta}((f''_1)^\eta, f_2^\eta)$
has nonzero exponent (this resultant could just be the coefficient
of a vertex raised to the mixed multiplicity).
\item
If $\mu_1 =\mu'_1 < \mu''_1 \le \mu_2$, the situation is just the 
opposite than in the previous case.
\end{itemize}
\end{itemize}
\end{cor}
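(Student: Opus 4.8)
The plan is to derive Corollary~\ref{cor:f} directly from the explicit description of $E$ in Corollary~\ref{cor:factor}, namely
\[
E = \prod_{\eta \in \Sigma} \Res_{(A'_1)^\eta, A_2^\eta}((f'_1)^\eta, f_2^\eta)^{\mu'(\eta)-\mu(\eta)}
\cdot \Res_{(A''_1)^\eta, A_2^\eta}((f''_1)^\eta, f_2^\eta)^{\mu''(\eta)-\mu(\eta)},
\]
by analyzing, for a fixed $\eta$, the exponents $\mu'(\eta) - \mu(\eta)$ and $\mu''(\eta)-\mu(\eta)$ in terms of the four integers $\mu'_1,\mu''_1,\mu_2$. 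First I would recall from \eqref{eq:mu} that $\mu'(\eta) = \min\{\mu'_1,\mu_2\}$ and $\mu''(\eta) = \min\{\mu''_1,\mu_2\}$, while the exponent $\mu(\eta)$ attached to the factor $\Res_{A_1^\eta,A_2^\eta}(f_1^\eta,f_2^\eta)$ on the right-hand side is computed with $A_1 = A'_1 + A''_1$, so that $\mu_1 = \min\{\mu'_1,\mu''_1\}$ (the minimum order of vanishing of $t$ is additive under the product $F_1 = F'_1 F''_1$ only in the sense that the leading $t$-exponent of a product is the sum, but the \emph{gap} $\mu_1(\eta)$ to the next term is the minimum of the two gaps) and hence $\mu(\eta) = \min\{\mu'_1,\mu''_1,\mu_2\}$.

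Given this, the corollary is a finite case check. If $\mu'_1 = \mu''_1$, then $\mu'(\eta) = \mu''(\eta) = \min\{\mu'_1,\mu_2\} = \mu(\eta)$, so both exponents in $E$ coming from $\eta$ vanish. If $\mu'_1 \ne \mu''_1$, say $\mu'_1 < \mu''_1$, then $\mu_1 = \mu'_1$ and $\mu(\eta) = \min\{\mu'_1,\mu_2\} = \mu'(\eta)$, so the $(f'_1)^\eta$-factor always disappears; the only possibly surviving factor is $\Res_{(A''_1)^\eta,A_2^\eta}((f''_1)^\eta,f_2^\eta)$ with exponent $\mu''(\eta) - \mu(\eta) = \min\{\mu''_1,\mu_2\} - \min\{\mu'_1,\mu_2\}$. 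One then splits according to the position of $\mu_2$: if $\mu_2 \le \mu'_1$ both minima equal $\mu_2$ and the exponent is $0$; if $\mu'_1 < \mu_2 < \mu''_1$ the exponent equals $\mu_2 - \mu'_1 > 0$; if $\mu''_1 \le \mu_2$ the exponent equals $\mu''_1 - \mu'_1 > 0$. The parenthetical remark that this surviving resultant may reduce to a coefficient raised to a mixed multiplicity is exactly Remark~\ref{re:monomial} together with the computation relating $\mathrm{mm}(v)$ to the $\mu'(\eta')$ carried out in the proof of Corollary~\ref{cor:main2}.

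I do not expect a genuine obstacle here: once the additivity statement $\mu_1(\eta) = \min\{\mu'_1(\eta),\mu''_1(\eta)\}$ is in place, everything reduces to comparing minima of pairs of integers. The one point deserving a sentence of justification is precisely that additivity, i.e.\ that when $f_1 = f'_1 f''_1$ the gap between the top $\eta$-degree face $A_1^\eta$ and the second $\eta$-layer of $A_1$ equals the smaller of the corresponding gaps for $A'_1$ and $A''_1$; this follows because the face $A_1^\eta = (A'_1)^\eta + (A''_1)^\eta$ of a Minkowski sum is the sum of faces, and the next-to-leading $\eta$-layer of $A'_1+A''_1$ is obtained by perturbing exactly one summand by its own next layer, so its height above $\nu_1^\eta = \nu'^\eta_1 + \nu''^\eta_1$ is $\min\{\mu'_1(\eta),\mu''_1(\eta)\}$. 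With that observed, "the proof is straightforward" as claimed, and I would simply write out the three-way split above.
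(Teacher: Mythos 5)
Your computation of the exponents is correct, and it is surely the ``straightforward'' argument the paper has in mind: the paper offers no written proof of this corollary beyond asserting it is straightforward, and the intended route is exactly yours. In particular, your one nontrivial observation is right: with $A_1=A'_1+A''_1$ one has $\mu_1(\eta)=\min\{\mu'_1(\eta),\mu''_1(\eta)\}$, since a non-minimal $\eta$-layer of a Minkowski sum is reached by moving exactly one summand off its $\eta$-face; hence the two exponents appearing in $E$ for a fixed $\eta$ are $\min\{\mu'_1,\mu_2\}-\min\{\mu'_1,\mu''_1,\mu_2\}$ and $\min\{\mu''_1,\mu_2\}-\min\{\mu'_1,\mu''_1,\mu_2\}$, of which at most one can be positive. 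The first bullet and the first two subcases follow exactly as you say.

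However, your three-way split does not prove the third bullet as it is stated; it contradicts it. Under the wlog assumption $\mu_1=\mu'_1<\mu''_1$, your (correct) formulas make the exponent of $\Res_{(A'_1)^\eta,A_2^\eta}((f'_1)^\eta,f_2^\eta)$ identically zero in every subcase, and in the case $\mu'_1<\mu''_1\le\mu_2$ they give exponent $\mu''_1-\mu'_1>0$ for $\Res_{(A''_1)^\eta,A_2^\eta}((f''_1)^\eta,f_2^\eta)$ --- that is, the \emph{same} factor survives as in the middle subcase, only with exponent $\mu''_1-\mu'_1$ instead of $\mu_2-\mu'_1$ --- whereas the corollary asserts that in this case ``the situation is just the opposite,'' i.e.\ that the $(f''_1)^\eta$-resultant disappears and the $(f'_1)^\eta$-resultant occurs. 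There is no reading of your formulas compatible with that claim (Example~\ref{ex:univariate} also supports your version: the surviving coefficient there is always the one attached to the larger gap). So you have in effect derived a corrected form of the third case rather than the statement as printed, and your write-up never notices the discrepancy: you conclude ``the proof is straightforward as claimed'' while your own case analysis conflicts with part of what you set out to prove. You should either justify explicitly how the third bullet is to be read so that it agrees with your exponent computation, or state plainly that your argument establishes the third case with the roles as in the second case (exponent $\mu''_1-\mu'_1$, independent of $\mu_2$) and that the ``opposite'' phrasing cannot be literally correct; as it stands, the proposal does not prove the statement as written.
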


\section{Conclusion and future work}

The intent of this book chapter was to present our main results relating the
mixed discriminant with the sparse resultant of two bivariate Laurent 
polynomials with fixed support and their {toric Jacobian}. 
On our way, we deduced a general multiplicativity formula for the mixed discriminant 
when one polynomial factors as $f=f'\cdot f''$. 
This formula occurred as a consequence of our main result, Theorem \ref{th:main}, and
generalized known formulas in the homogeneous case to the sparse setting. 
Furthermore, we obtained a new proof of the bidegree formula
for planar mixed discriminants, which appeared in ~\cite{MD}. 

The generalization of our formulas to any number of variables will allow
us to extend our applications and to develop effective computational techniques
for sparse discriminants based on well tuned software for the computation of
resultants.

\paragraph{\small{\textsc{Acknowledgments}.}}
A.~Dickenstein is partially supported by UBACYT
  20020100100242, CONICET PIP 112-200801-00483 and ANPCyT 2008-0902
  (Argentina). She also acknowledges the support of the M.~Curie
Initial Training Network ``SAGA" that made possible her visit 
to the University of Athens in January 2012, where this work was initiated.  
I.Z.\ Emiris was partially supported by the M.~Curie
Initial Training Network ``SAGA" (ShApes, Geometry,
Algebra), FP7-PEOPLE contract PITN-GA-2008-214584.
A.~Karasoulou's research has
received funding from the European Union (European Social Fund) 
and Greek national funds through the Operational Program 
``Education and Lifelong Learning" of the National Strategic 
Reference Framework, Research Funding Program ``ARISTEIA",
Project ESPRESSO: Exploiting Structure in Polynomial Equation and System
Solving with Applications in Geometric and Game Modeling.


\begin{thebibliography}{100}

\bibitem{OB} O.\ Benoist. Degr\'es d' homog\'en\'eit\'e de l' ensemble des intersactions compl\`etes singuli\`eres. \emph{Annales de l'Institut Fourier}, 62(3):1189--1214, 2012.

\bibitem{BJ} L.\ Bus\'e and J.-P.\ Jouanolou. A Computational 
approach to the Discriminant of homogeneous polynomials. Preprint, 
arXiv:1210.4697, 2012.

\bibitem{MD}
E.\ Cattani, M.A.\ Cueto, A.\ Dickenstein, S.\ Di Rocco and B.\ Sturmfels. 
Mixed Discriminants. To appear: Math.\ Z., 2013.

\bibitem{CDS} E.\ Cattani, A.\ Dickenstein, and B.\ Sturmfels.
Residues and resultants. 
\emph{Journal of Math.\ Sciences}, 5:119--148, University of Tokyo, 1998.

\bibitem{DS} C.\ D'Andrea and M.\ Sombra. Sparse resultants via 
multiprojective elimination. Preprint, 2012.
Available at: http://atlas.mat.ub.es/personals/sombra/publications.html.

\bibitem{D} A.\ Dickenstein. A world of binomials. 
\emph{Foundations of Computational Mathematics}, Hong Kong 2008, 
eds.\ F.\ Cucker, A.\ Pinkus, M.\ Todd, London Mathematical 
Society Lecture Note Series, 363:42--66, 2009.    

\bibitem{D13} A.\ Dickenstein. Mixed discriminants and toric jacobians.
Manuscript, 2013.

\bibitem{DRRS} A.\ Dickenstein, J.M.\ Rojas, K.\ Rusek and J.\ Shih. Extremal
real algebraic geometry and A-discriminants.\emph{ Moscow Mathematical
J.}, 7(3):425--452,  2007.

\bibitem{DT} A.\ Dickenstein and L.F.\ Tabera. Singular tropical hypersurfaces.   
\emph{Discrete \& Computational Geometry}, 47(2):430--453, 2012.

\bibitem{DFS} A.\ Dickenstein, E.M.\ Feichtner and B.\ Sturmfels. Tropical Discriminants.
\emph{J.\ Amer.\ Math.\ Soc.}, 20:1111--1133, 2007,

\bibitem{EFKP} I.Z.\ Emiris, V.\ Fisikopoulos, C.\ Konaxis, and L.\ Penaranda, 
An output-sensitive algorithm for 
computing projections of resultant polytopes. 
In \emph{Proc.\ ACM Symp.\ on Computational Geometry}, pages 179--188, 2012.

\bibitem{EKKL} I.Z.\ Emiris, T.\ Kalinka, C.\ Konaxis, 
and T.\ Luu Ba. Sparse implicitization by interpolation: 
Characterizing non-exactness and an application to computing discriminants. 
\emph{J.\ Computer Aided Design}, Special Issue, 45, pages 252--261, 2013.

\bibitem{ETT08} I.Z.\ Emiris, E.\ Tsigaridas, G.\ Tzoumas. 
The Predicates for the Exact Voronoi Diagram of 
Ellipses under the Euclidean Metric. \emph{Int.\ J.\ Comput.\ Geometry Appl.},
18(6):567--597, 2008.

\bibitem{E10}
A.\ Esterov. Newton polyhedra of discriminants of projections. 
\emph{Discrete \& Computational Geometry}, 44(1):96--148, 2010. 

\bibitem{E} A.\ Esterov. Discriminant of system of equations. 
Preprint, arXiv:1110.4060, 2011.  

\bibitem{FMRS} J.-C.\ Faug\'{e}re, G.\ Moroz, F.\ Rouillier,
and M.\ Safey El Din.
Classification of the Perspective-Three-Point problem:
Discriminant variety and real solving polynomial systems of inequalities.
In \emph{Proc.\ ACM ISSAC}, Hagenberg (Austria), pages 79--86, 2008.

\bibitem{gkz} I.M.\ Gel'fand, M.M.\ Kapranov and A.V.\ Zelevinsky, 
Discriminants, resultants and multidimensional 
determinants. Birkh\"auser, Boston, 1994.

\bibitem{gkz89} I.M.\ Gel'fand, M.M.\ Kapranov and A.V.\ Zelevinsky.
Hyper\-geo\-metric functions and toric varieties.
\emph{Funktsional.\ Anal.\ i Prilozhen.}, 23, No. 2:12--26, 1989.

\bibitem{GN} L.\ Gonz\'{a}lez-Vega and I.\ Nacula. 
Efficient topology determination of implicitly defined algebraic plane curves. 
\emph{Computer Aided Geometric Design}, 19(9):719--743, 2002.

\bibitem{MRCW} G.\ Moroz, F.\ Rouiller, D.\ Chablat and P.\ Wenger. 
On the determination of cusp points of 3-RPR parallel manipulators. 
\emph{Mechanism and Machine Theory}, 45(11):1555--1567, 2010.

\bibitem{N} J.\ Nie. Discriminants and non-negative polynomials. 
\emph{J.\ Symbolic Comput.}, 47:167--191, 2012.

\bibitem{PS} P.\ Pedersen and B.\ Sturmfels. Product formulas 
for resultants and Chow forms. \emph{Math.\ Z.}, 214:377--396, 1993.

\bibitem{Rin}\label{Rincon} E.F.\ Rinc\'on. Computing tropical linear spaces. 
\emph{J.\ Symbolic Comput.}, 51:86--93, 2013.

\bibitem{SS} M.\ Shub and S.\ Smale. Complexity of B\'ezout's theorem 
I.\ Geometric aspects. \emph{J.\ Amer.\ Math.\ Soc.}, 6(2):459--501, 1993.

\bibitem{S} B.\ Sturmfels. The Newton polytope of the resultant. 
\emph{J.\ Algebraic Combin.}, 3:207--236, 1994.

\bibitem{Syl} J.J.\ Sylvester. Sur l'extension de la th\'eorie 
des r\'esultants alg\'ebriques. \emph{Comptes Rendus de
l'Acad\'emie des Sciences}, LVIII:1074--1079, 1864.

\end{thebibliography}
\end{document}